\newcommand{\boundellipse}[3]
{(#1) ellipse (#2 and #3)
}
\definecolor{darkmagenta}{rgb}{0.75, 0.0, 0.85}
\newtheorem{theorem}{Theorem}
\newtheorem{corollary}[theorem]{Corollary}
\newtheorem{lemma}[theorem]{Lemma}
\newtheorem{question}{Question}
\theoremstyle{definition}
\newcommand\DELETE[1]{}
\newcommand{\bigo}{\mathcal{O}}
\begin{document}

\title{{\bf On locating and neighbor-locating colorings of sparse graphs}}

\author{
{\sc Dipayan Chakraborty}$\,^{a}$, {\sc Florent Foucaud}$\,^{a}$, {\sc Soumen Nandi}$\,^{b}$,\\ {\sc Sagnik Sen}$\,^{c}$, {\sc D K Supraja}$\,^{b,c}$ \\
\mbox{}\\
{\small $(a)$ Université Clermont Auvergne, CNRS, Clermont Auvergne INP, Mines Saint-\'Etienne,}\\{\small LIMOS, 63000 Clermont-Ferrand, France}\\
{\small $(b)$ Netaji Subhas Open University, India}\\
{\small $(c)$ Indian Institute of Technology Dharwad, India}
}

\date{\today}

\maketitle

\begin{abstract}
A proper $k$-coloring of a graph $G$ is a \emph{neighbor-locating $k$-coloring} if for each pair of vertices in the same color class, the two sets of colors found in their respective neighborhoods are different. The \textit{neighbor-locating chromatic number} $\chi_{NL}(G)$ is the minimum $k$ for which $G$ admits a neighbor-locating $k$-coloring. A proper $k$-vertex-coloring of a graph $G$ is a \emph{locating $k$-coloring} if for each pair of vertices $x$ and $y$ in the same color-class, there exists a color class $S_i$ such that $d(x,S_i)\neq d(y,S_i)$. The locating chromatic number $\chi_{L}(G)$ is the minimum $k$ for which $G$ admits a locating $k$-coloring.

Our main results concern the largest possible order of a sparse graph of given neighbor-locating chromatic number. More precisely, we prove that if a connected graph $G$ has order $n$, neighbor-locating chromatic number $k$ and average degree~$d$, then $n$ is upper-bounded by $\bigo(d^2 k^{\lceil d \rceil+1})$. We also design a family of graphs of bounded maximum degree whose order is close to reaching this upper bound. Our upper bound generalizes two previous bounds from the literature, which were obtained for graphs of bounded maximum degree and graphs of bounded cycle rank, respectively.

Also, we prove that determining whether $\chi_L(G)\le k$ and $\chi_{NL}(G)\le k$ are NP-complete for sparse graphs: more precisely, for graphs with average degree at most 7, maximum average degree at most 20 and that are $4$-partite.

We also study the possible relation between the ordinary chromatic number, the locating chromatic number and the neighbor-locating chromatic number of a graph.
\end{abstract}

\noindent \textbf{Keywords:}
locating coloring, neighbor-locating coloring, neighbor-locating chromatic number, sparse graphs, computational complexity.

\section{Introduction}

Our aim is to study two graph coloring problems from the field of graph identification, namely, \textit{locating coloring} and \textit{neighbor-locating coloring}, with an emphasis on the latter.

\paragraph{Identification problems.} The above two problems belong to the general framework of \emph{identification problems}, where one is given a graph (or a hypergraph) and one wishes to distinguish all vertices of the graph by giving each of them a unique attribute. Classically,
the problems in this area largely fall into two main categories: (i) local identification problems, and (ii) distance-based identification problems. The study of the former class of problems was initiated by R\'enyi in the 1960s for hypergraphs, under the name of \emph{separating sets}~\cite{renyi1961} (also later called \emph{separating systems}~\cite{BS07}, \emph{test covers}~\cite{MS85}, \emph{discriminating codes}~\cite{DC}, etc). The concept was then adapted to graphs under the name of \emph{locating-dominating sets} by Slater in the 1980s~\cite{slater1988locationdom}. On the other hand, the prominent distance-based identification problem is the \emph{metric dimension} problem for graphs, introduced independently by Harary and Melter~\cite{Harary76} and by Slater~\cite{slater1975leaves} in the 1970s.

In all these problems, one seeks a (small) set of \emph{solution} vertices (possibly, hyperedges in the case of hypergraphs) and wishes to distinguish the vertices either by their neighbourhoods in the solution in the case of the local problems, or by their distances to the solution vertices, in the case of the distance-based problems.
These types of problems are very fundamental and have numerous applications in various fields, such as for example, fault-detection in networks~\cite{Rao93,UTS04}, biological diagnosis~\cite{MS85}, machine learning~\cite{CN98}, canonical representations of graphs~\cite{Babai80,KimPSV05}, coin-weighing problems~\cite{ST04}, games~\cite{Chvatal83}, learning theory~\cite{GRS93}, etc.

One of the most fundamental graph problems is the graph coloring problem, as it is essential to model applications such as clustering, resource allocation, etc. Thus, it is a natural approach to combine the concepts of coloring and identification. As one of the earliest instances of this effort, the concept of \textit{locating coloring} was introduced in 2002 by Chartrand \textit{et al.}~\cite{Chartrand200289}, providing a coloring version of the aforementioned \emph{distance-based} identification problems like the metric dimension. Here, one seeks a proper coloring of the graph such that each vertex is uniquely identified by its distances to the color classes. In 2014, a coloring version of the above \emph{local} identification problems was introduced by Behtoei and Anbarloei~\cite{BA2014} (under the name of \emph{adjacency locating coloring}) and rediscovered by Alcon \textit{et al.} in 2020~\cite{alcon2020neighbor} under the name of \textit{neighbor-locating coloring} (see below for the formal definitions). The setting of these two problems is very natural: instead of minimizing the size of a solution set like in the classic identification problems, we wish to assign a color to each vertex in order to partition the vertex set, e.g., to perform resource allocation, and thus we want to minimize the number of colors. Moreover, we also want to be able to uniquely identify the vertices in each color class, with one of the aforementioned applications of identification problems in mind, for example, fault-detection.

While the former concept of locating coloring has been extensively studied since 2002 \cite{assiyatun2020calculating,baskoro2013characterizing,baskoro2021improved,BA2014,behtoei2011locating,BO2016,Chartrand200289,chartrand2003graphs,furuya2019upper}, our focus of study is the latter (neighbor-locating coloring), which is more recent but has already started gaining some attention in the very recent years~\cite{alcon2019neighbor,alcon2020neighbor,alcon2021neighbor,HERNANDO2018131,mojdeh2022conjectures}.

\paragraph{Notation and terminology.} Throughout this article, we consider only simple graphs (graphs without loops and multiple edges). Moreover, we will use the standard terminology and notation used in ``Introduction to Graph Theory'' by West~\cite{west2001introduction}.

Given a graph $G$, a \textit{(proper) $k$-coloring} is a function
$f: V(G) \to S$, where $S$ is a set of $k$ \textit{colors}, such that $f(u) \neq f(v)$ whenever $u$ is adjacent to $v$. Usually, we will assume the set of $k$ colors $S$ to be equal to $\{1,2,\cdots, k\}$, unless otherwise stated. The \textit{chromatic number} of $G$, denoted by $\chi(G)$, is the minimum $k$ for which $G$ admits a $k$-coloring.

Given a $k$-coloring $f$ of $G$, its $i^{th}$ color class is the collection $S_i$ of vertices that have received the color $i$. The \textit{distance} between a vertex $x$ and a set $S$ of vertices is given by 
$d(x, S) = \min\{d(x, y) : y \in S\}$,
where the distance $d(x,y)$ between the vertices $x$ and $y$ is the number of edges in a shortest path connecting $x$ and $y$. Two vertices $x$ and $y$ are \textit{metric-distinguished} with respect to $f$ if $d(x,S_i)\neq d(y,S_i)$ for some color class $S_i$. A $k$-coloring $f$ of $G$ is a \textit{locating $k$-coloring} if any two distinct vertices are metric-distinguished with respect to $f$.
The \textit{locating chromatic number} of $G$, denoted by $\chi_L(G)$, is the minimum $k$ for which $G$ admits a locating $k$-coloring.

Given a $k$-coloring $f$ of $G$, suppose that a neighbor $y$ of a vertex $x$ belongs to the color class $S_i$. In such a scenario, we say that 
$i$ is a \textit{color-neighbor} of $x$ (with respect to $f$). 
The set of all color-neighbors of $x$ is denoted by $N_f(x)$.
Two vertices $x$ and $y$ are \textit{neighbor-distinguished} with respect to $f$ if 
either $f(x) \neq f(y)$ or 
$N_f(x) \neq N_f(y)$. 
A $k$-coloring $f$ is \textit{neighbor-locating $k$-coloring} if each pair of distinct vertices are neighbor-distinguished. 
The \textit{neighbor-locating chromatic number} of $G$, denoted by $\chi_{NL}(G)$, is the minimum $k$ for which $G$ admits a neighbor-locating $k$-coloring. 

The \textit{average degree} of a graph $G$ having $n$ vertices and $m$ edges is the average of the degree of its vertices, which, due to the Handshaking Lemma, is equal to $\frac{2m}{n}$. The average degree of $G$ is a measure of the density of the graph: if it is bounded by a constant, then the graph has a linear number of edges, and may be called sparse. However, a graph may have low average degree and still contain very dense parts. The \textit{maximum average degree} of $G$ is the maximum of the average degrees taken over all the subgraphs of $G$. This notion serves as a more ``uniform'' measure of the graph density.
Two non-adjacent vertices $x,y\in G$ are \textit{false twins} if $N(x)=N(y)$, where the \textit{open neighborhood of $x$}, denoted by $N(x)$, is the set of all vertices adjacent to $x$.

\paragraph{Applications.} Neighbor-locating coloring (and locating coloring, with a slight modification) can model the following fault-detection 
problem. This kind of fault-detection in networks and complex systems is of high practical importance in the industry, see for example the settings of \textit{multi-core C \& I cables}~\cite{lee2018multi}, and \textit{smart grids}~\cite{krivohlava2022failure}. We wish to monitor a network for faults (or a facility for hazards). The facility is partitioned into several segments (each represented by a color), and each segment consists of multiple nodes where a fault may occur. To every segment, we associate one detector that monitors it for potential faults. To avoid mistakes in the detection, two adjacent nodes cannot be in the same segment. Every detector is able to signal the following two things in case of occurrence of a fault (at exactly one node): (i) the segment where the fault has occurred,
(ii) the segments that are adjacent to the faulty node. 

Hence, if all nodes in a given segment have different sets of segments in their neighborhood, the information (i) and (ii) from all detectors is sufficient to locate the faulty node. To reduce costs, one wishes to minimize the number of detectors (that is, segments). In such a scenario, the network is modeled by a graph, nodes correspond to vertices, node adjacencies to edges, and segments to color classes. Thus, this fault-detection scenario corresponds to the neighbor-locating coloring problem.

To minimize the number of detectors even further (but at the expense of more powerful detectors), the setting can be slightly modified for (ii) if every detector can measure the smallest \emph{distance} from a node of its segment to the node where the fault has occurred. In that case, this fault-detection scenario corresponds to the locating coloring problem.

\paragraph{Context and contributions.} Observe that a neighbor-locating coloring is, in particular, a locating coloring as well. Therefore, we have the following obvious relation among the three parameters~\cite{alcon2020neighbor}:
$$\chi(G) \leq \chi_{L}(G) \leq \chi_{NL}(G).$$
Note that for complete graphs, all three parameters have the same value, that is, 
equality holds in the above relation. Nevertheless, the difference between the pairs of values of parameters $\chi(\cdot), \chi_{NL}(\cdot)$ and $\chi_L(\cdot), \chi_{NL}(\cdot)$, respectively, can be arbitrarily large. Moreover, it was proved that for any pair $p,q$ of integers with $3\leq p\leq q$, there exists a connected graph $G_1$ with $\chi(G_1)=p$ and $\chi_{NL}(G_1)=q$~\cite{alcon2020neighbor} and a connected graph $G_2$ with $\chi_{L}(G_2)=p$ and $\chi_{NL}(G_2)=q$~\cite{mojdeh2022conjectures}. 
The latter of the two results positively settled a conjecture posed in~\cite{alcon2020neighbor}. We strengthen these results as follows.

\begin{theorem}~\label{theorem_pqr}
For all $2 \leq p \leq q \leq r$, except when $p=q=2$ and $r >2$, there exists a connected graph $G_{p,q,r}$ satisfying $\chi(G_{p,q,r}) = p$, $\chi_{L}(G_{p,q,r}) = q$, and $\chi_{NL}(G_{p,q,r}) = r$. 
\end{theorem}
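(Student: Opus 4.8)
The plan is to realize the triple by a single family built from a dense \emph{core} that simultaneously pins $\chi$ and $\chi_L$, together with a sparse \emph{tail} that inflates only $\chi_{NL}$. As core I take the complete multipartite graph $B=K_{q-p+1,1,\dots,1}$ on $q$ vertices with exactly $p$ parts (possible since $p\le q$); when $r>q$ I attach, to a vertex $v_0$ of one of the singleton parts, a pendant path $v_0 p_1 p_2 \cdots p_\ell$ of a length $\ell=\ell(r)$ fixed below, and when $r=q$ I set $G_{p,q,r}=B$. Since $p\ge 2$ the graph is connected. That $\chi(G_{p,q,r})=p$ is immediate: one vertex from each part forms a $K_p$, so $\chi\ge p$, while the canonical proper $p$-coloring of $B$ (one color per part) extends along the bipartite pendant path because $p\ge 2$.

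Next I establish the locating/neighbor-locating lower bounds and their persistence after attaching the tail. Any two vertices in the same part of $B$ are false twins, and attaching the tail at a \emph{singleton}-part vertex leaves every part's internal neighborhoods unchanged; hence in $G_{p,q,r}$ any two vertices of $B$ are either adjacent (distinct parts) or false twins (common part). Two false twins sharing a color have equal color-neighborhoods and equal distance vectors to all color classes, so they are neither neighbor- nor metric-distinguished; consequently every locating or neighbor-locating coloring assigns the $q$ vertices of $B$ pairwise distinct colors, giving $\chi_L\ge q$ and $\chi_{NL}\ge q$. For the upper bound $\chi_L\le q$ I color $B$ with $q$ distinct colors and the path with a proper coloring reusing them; as the tail is pendant, the distance from $p_i$ to a fixed anchor color class inside $B$ strictly increases with $i$, so the path vertices receive pairwise distinct distance vectors and are also separated from $B$. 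Hence $\chi_L=q$.

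The core of the proof is $\chi_{NL}=r$. Each interior path vertex has both neighbors on the path, so its color-neighborhood is determined inside the tail; two such vertices of equal color and equal unordered pair of neighbor-colors would violate the neighbor-locating condition in $G_{p,q,r}$. Counting the signatures $(\text{color},\text{neighbor-pair})$ available to $k$ colors shows that a pendant path on $\ell$ vertices needs at least $\rho(\ell)$ colors, where $\rho$ is nondecreasing, unbounded, and grows by at most $1$ as $\ell$ grows by $1$; I choose $\ell=\ell(r)$ minimal with $\rho(\ell)=r$, which exists for every $r\ge 3$ (and $r>q\ge 2$ forces $r\ge 3$). Restricting a hypothetical neighbor-locating $(r-1)$-coloring of $G_{p,q,r}$ to the tail then contradicts this bound, giving $\chi_{NL}\ge r$. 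For $\chi_{NL}\le r$ I superimpose an optimal neighbor-locating $r$-coloring of the path on the $q$-colored core and check that core vertices, whose color-neighborhoods are pairwise distinct and of size at least $p-1$, cannot coincide with the sparse path vertices.

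I expect the calibration in the previous paragraph to be the main obstacle: pinning $\chi_{NL}$ to the \emph{exact} value $r$ (not merely bounding it) requires the precise neighbor-locating chromatic number of the pendant-path gadget as a function of its length, the verification that this function attains every value $r>q$, and the construction of a global neighbor-locating $r$-coloring in which the dense core and the path create no accidental coincidences (this is most delicate when $p-1$ is small, e.g.\ $p=2$, where core vertices also have color-neighborhoods of size $1$ or $2$). The case $r=q$ is handled by the core alone, and the small cases $p=2$ follow from the same construction; meanwhile the excluded triple $p=q=2$, $r>2$ is genuinely unrealizable, since a connected graph with $\chi_L=2$ must be $K_2$ (with two colors every non-isolated vertex of a class lies at distance $1$ from the other class and at distance $0$ from its own, so each class is a single vertex), which forces $\chi_{NL}=2$.
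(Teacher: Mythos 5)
Your single-family design (a complete multipartite core pinning $\chi$ and $\chi_L$, plus a pendant path calibrated by length to pin $\chi_{NL}$) is genuinely more unified than the paper's proof, which splits into four cases with different gadgets --- $K_p$ with pendant stars for $p<q=r$, odd cycles (alone or glued to $K_p$) for $p=q<r$, a path with a clique at one end and a star near the other end for $4\le p<q<r$ --- and even outsources the case $2=p<q<r$ to~\cite{mojdeh2022conjectures}. Your false-twin lower bounds for the core are correct, and your closing argument that $p=q=2$, $r>2$ is unrealizable is a nice addition (the paper only states the exclusion). However, there are genuine gaps. First, the sentence ``restricting a hypothetical neighbor-locating $(r-1)$-coloring of $G_{p,q,r}$ to the tail then contradicts this bound'' is invalid as stated: restrictions of neighbor-locating colorings to induced subgraphs need not be neighbor-locating (Theorem~\ref{theorem_subgh_gh_gap} of the paper is precisely about this failure), and in particular pairs involving $p_1$, whose color-neighborhood contains the external color of $v_0$, may be separated in $G_{p,q,r}$ but not in the standalone path. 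Your preceding signature count is the right tool, but $\rho$ must then be defined by counting signatures \emph{inside} $G_{p,q,r}$: with $k$ colors there are at most $k\binom{k}{2}=\frac{k^2(k-1)}{2}$ available pairs (color, color-neighborhood) for degree-two vertices, so $\ell-1>\frac{(r-1)^2(r-2)}{2}$ rules out $r-1$ colors --- this matches the paper's Inequality~(\ref{eq5}) and the path thresholds of Theorem~\ref{path} --- whereas invoking $\chi_{NL}(P_\ell)$ of the abstract path does not suffice. (Note also that once the tail is present, the exclusion forces $q\ge 3$ and hence $r\ge 4$, which is exactly the range Theorem~\ref{path} requires, not merely $r\ge 3$.)

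Second, both upper bounds are asserted rather than proved, and they fail under natural instantiations. For $\chi_L\le q$: if $q=3$ and the path alternates the two colors other than $f(v_0)$, no anchor class disjoint from the path and distinct from $f(v_0)$ exists, and one checks directly (core $K_3$ with $f(v_0)=1$ and path colored $2,3,2,\dots$) that $p_1$ and the core vertex of its color receive identical distance vectors; the construction is repairable by letting the path reuse $f(v_0)$ from $p_2$ onwards, but some explicit specification is needed, and your claim that path vertices ``are also separated from $B$'' is false for an arbitrary proper reuse coloring. For $\chi_{NL}\le r$: when $p=2$ or $p=3$, the big-part vertices have color-neighborhoods $\{f(v_0)\}$ or $\{f(v_0),f(s_1)\}$ of size at most two and can collide with tail signatures --- exactly the ``accidental coincidences'' you flag as the main obstacle but do not resolve. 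The paper closes the analogous step by importing from~\cite{alcon2019neighbor} neighbor-locating colorings of paths and cycles in which every internal vertex has two \emph{distinct} color-neighbors, and by forbidding specific colors on pendant leaves; with that lemma (and the ample slack between the thresholds $\frac{(r-1)^2(r-2)}{2}$ and $\frac{r^2(r-1)}{2}$, which also guarantees a suitable $\ell$ exists) your construction should go through, but as written the calibration step --- which you yourself identify as the crux --- is missing.
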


One fundamental difference between coloring and locating coloring (resp., neighbor-locating coloring) is that the restriction of coloring of $G$ to an (induced) subgraph $H$ 
is necessarily a coloring, whereas the analogous property is not true for 
locating coloring (resp., neighbor-locating coloring). Interestingly, we show that the 
locating chromatic number (resp., neighbor-locating chromatic number) of an induced subgraph $H$ of $G$ can be arbitrarily larger than that of $G$.

\begin{theorem}~\label{theorem_subgh_gh_gap}
For every $k \geq 0$, there exists a graph $G_k$ having an induced subgraph $H_k$ such that 
$\chi_L(H_k) - \chi_L(G_k) = k$  and 
$\chi_{NL}(H_k) - \chi_{NL}(G_k) = k$. 
\end{theorem}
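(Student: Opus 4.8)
The plan is to realise the whole family from one construction: a large clique will pin both parameters of $G_k$ to a common value $N$, while a bundle of mutually false-twin leaves — separated inside $G_k$ by an auxiliary ``addressing gadget'' that is deleted when we pass to the induced subgraph $H_k$ — will force exactly $k$ extra colors in $H_k$. Concretely, fix a sufficiently large integer $N$ (depending on $k$) and take a clique $C=\{c_1,\dots,c_N\}$. Attach $m:=N+k-1$ pendant vertices $\ell_1,\dots,\ell_m$, each adjacent to the single clique vertex $c_1$; call this graph $H_k$. To obtain $G_k$, add an independent ``addressing gadget'' $D=\{d_1,\dots,d_r\}$ with $r=\lceil\log_2 m\rceil$, and join each leaf $\ell_i$ to the sub-collection of $D$ selected by the binary expansion of $i$, so distinct leaves see distinct subsets of $D$. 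By construction $H_k=G_k-D$ is an induced subgraph of $G_k$; in $H_k$ the leaves are mutually false twins (each has neighborhood exactly $\{c_1\}$), whereas in $G_k$ they are pairwise separated by their gadget-neighborhoods.

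I would then compute the four parameters. For $G_k$, the clique gives $\chi_{NL}(G_k)\ge\chi(G_k)\ge N$, and a matching $N$-coloring is obtained by coloring $C$ with $1,\dots,N$, the gadget vertices with the $r$ colors $3,\dots,r+2$ (legal once $r\le N-2$), and \emph{every} leaf with color $2$. The leaves then carry pairwise distinct color-neighborhoods $N_f(\ell_i)=\{1\}\cup T_i$, and are separated from $c_2$ by a cardinality count, so $\chi_{NL}(G_k)=N$. For $H_k$ the crucial point is that mutually false twins must receive pairwise distinct colors in any neighbor-locating coloring (equal color forces equal $N_f$); hence the $m$ leaves use $m$ distinct colors, each different from the color of $c_1$. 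At most $N-1$ of these can be recycled from the clique palette, so at least $m-(N-1)=k$ fresh colors are spent, giving $\chi_{NL}(H_k)\ge N+k$; the reverse inequality is the explicit coloring that reuses colors $2,\dots,N$ on $N-1$ leaves and spends $k$ new colors on the rest. Thus $\chi_{NL}(H_k)=N+k$ and the gap is exactly $k$.

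The locating version reuses the same two graphs. For $G_k$ the value comes for free from the sandwich $N\le\chi(G_k)\le\chi_L(G_k)\le\chi_{NL}(G_k)=N$, so $\chi_L(G_k)=N$. For $H_k$ I would prove the locating analogue of the twin lemma: if two false twins $x,y$ get the same color, the transposition swapping them is a graph automorphism fixing every other vertex, so $d(x,S_i)=d(y,S_i)$ for every color class $S_i$ and they are not metric-distinguished; hence false twins again need distinct colors. A leaf colored like a non-$c_1$ clique vertex is still metric-separated from it (their distances to a third clique color differ), so the same counting yields $\chi_L(H_k)=N+k$. With $k=0$ handled trivially by $G_0=H_0$, both equalities of the statement follow.

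The main obstacle is forcing the gap to be \emph{exactly} $k$, which amounts to pinning all four parameters at once. The easy inequalities are the clique lower bound for $G_k$ and the explicit colorings; the delicate side is the pair of lower bounds — one must guarantee that the gadget lets $G_k$ be colored with no color beyond the clique palette (so $\chi_{NL}(G_k)$ neither drops below $N$, ruled out by the clique, nor exceeds it), while in $H_k$ the false-twin leaves are forced to spend precisely $k$ extra colors and not fewer. This is exactly where the false-twin/automorphism lemmas carry the argument, and where the size bounds must be checked: $N$ is chosen large enough that $r\le N-2$ and that every leaf color-neighborhood stays strictly smaller than $N_f(c_2)$. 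The only genuinely graph-specific verification is that the binary addressing gadget separates all $m$ leaves using only already-present colors; everything else reduces to counting on top of the twin lemmas.
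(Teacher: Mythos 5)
Your proof is correct, and it rests on the same key lemma as the paper's proof: false twins must receive distinct colors under any locating or neighbor-locating coloring, so an induced subgraph containing many pendant false twins is expensive, while the supergraph can break those twins cheaply via vertices that disappear when passing to the subgraph. Your realization of this idea, however, is genuinely different. The paper takes $G_k$ to be a star-like graph: a center $v$ joined to $2k$ isolated vertices and to both endpoints of $k$ disjoint edges $b_ib'_i$; the private partner $b'_i$ distinguishes each $b_i$ (giving $\chi_L(G_k)=\chi_{NL}(G_k)=2k+1$), and deleting the partners leaves $H_k=K_{1,3k}$ with both parameters equal to $3k+1$ --- a per-twin partner gadget, with both values growing with $k$. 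You instead pin the base value at an arbitrary $N$ using a clique, and break the $m=N+k-1$ pendant twins with a \emph{shared} binary addressing gadget of only $\lceil\log_2 m\rceil$ vertices, so that in $G_k$ all leaves can reuse a single color while carrying pairwise distinct color-neighborhoods. Your version buys flexibility --- the gap $k$ is realized at every sufficiently large base value $N$, and the deleted set is logarithmic rather than linear in the number of twins --- at the cost of more bookkeeping: you must check the coloring of $G_k$ against the clique (in particular you need $r+2<N$, so that color $N$ witnesses $N_f(\ell_i)\neq N_f(c_2)$, not merely the cardinality count), and there is a small indexing slip, since $r=\lceil\log_2 m\rceil$ bits encode only the values $0,\dots,2^r-1$; index the leaves from $0$ (the empty-address leaf is harmless, as $N_f(\ell)=\{1\}$ remains unique among color-$2$ vertices) or take $r=\lceil\log_2 (m+1)\rceil$. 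All of these constraints hold once $N$ is large relative to $k$, which you correctly flag, so the argument goes through; the paper's construction is smaller and needs essentially no verification, while yours is more general and quantitatively leaner in the deleted part.
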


Alcon \textit{et al.}~\cite{alcon2020neighbor} showed that the number $n$ of vertices of $G$ is bounded above by $k(2^{k-1}-1)$, where $\chi_{NL}(G)=k$ and $G$ has no isolated vertices, and this bound is tight. This exponential bound is reduced to a polynomial one when $G$ has maximum degree $\Delta$. Indeed it was further shown in~\cite{alcon2020neighbor} that the upper-bound $n\leq k\sum_{j=1}^{\Delta}{k-1\choose j} = \bigo (k^{\Delta+1})$ holds (for graphs with no isolated vertices and when $\Delta\leq k-1$). The tightness of this bound was left open. Alcon \textit{et al.}~\cite{alcon2021neighbor} gave the upper bound $n\leq \frac{1}{2}(k^3+k^2-2k)+2(c-1) = \bigo (k^3)$ for graphs of order $n$, neighbor-locating chromatic number $k$ and cycle rank $c$, where the \textit{cycle rank} $c$ of a graph $G$, is defined as $c = |E(G)| - n + 1$. Further, they also obtained tight upper bounds on the order of trees and unicyclic graphs in terms of the neighbor-locating chromatic number~\cite{alcon2021neighbor}, where a unicyclic graph is a connected graph having exactly one cycle.

A connected graph with cycle rank $c$ and order $n$ has $n+c-1$ edges and a graph of order $n$ and maximum degree $\Delta$ has at most $\frac{\Delta}{2}n$ edges. Thus, the two latter bounds, which are in terms of cycle rank $c$ and maximum degree $\Delta$ respectively, can be seen as two approaches for studying the neighbor-locating coloring for sparse graphs. We generalize this approach by studying graphs with a given \emph{average} degree and neighbor-locating chromatic number $k$. For such graphs, we prove the following.

\begin{theorem}\label{th_lower bound_new}
Let $G$ be a connected graph on $n$ vertices, with neighbor-locating chromatic number $k$ and 
average degree $d$. Then, we have $n = \bigo (d^2k^{\lceil d \rceil +1})$. 
More precisely:
\begin{itemize}
\item[(i)] if $k\leq \lceil d \rceil$, then 
$n < \lceil d \rceil k^{\lceil d \rceil -1};$
\item[(ii)] if $k\geq \lceil d \rceil+1$, then $\displaystyle{n  \le k \underset{i=1}{\overset{\lceil d \rceil}\sum}(\lceil d \rceil+1-i){k-1 \choose i}}.$
Moreover, any graph $G$ whose order attains the upper bound has maximum degree $\Delta \leq \lceil d \rceil+1$ and exactly $k{k-1 \choose i}$ vertices of degree $i$.
\end{itemize}
\end{theorem}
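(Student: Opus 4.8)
The plan is to recast the neighbor-locating condition as a counting bound stratified by vertex degree, and then to fold in the handshake identity $\sum_{v}\deg(v)=dn$ through a single, carefully weighted linear combination. Throughout, write $D=\lceil d\rceil$, let $n_i$ denote the number of vertices of degree $i$, and let $N_{\le j}=\sum_{i\le j}n_i$ be the number of vertices of degree at most $j$. The first step is a degree-stratified counting lemma. For a proper coloring $f$, the color-neighbor set $N_f(x)$ of any vertex $x$ is a nonempty subset of the $k-1$ colors distinct from $f(x)$, and clearly $|N_f(x)|\le \deg(x)$. Since $f$ is neighbor-locating, within each of the $k$ color classes the sets $N_f(x)$ are pairwise distinct. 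Restricting attention, inside a fixed color class, to the vertices of degree at most $j$, their color-neighbor sets are distinct nonempty subsets of size at most $j$ of a fixed $(k-1)$-set, of which there are $\sum_{i=1}^{j}\binom{k-1}{i}$. Summing over the $k$ classes yields the key inequality $N_{\le j}\le k\sum_{i=1}^{j}\binom{k-1}{i}$.

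For part (ii), I assume $k\ge D+1$, which guarantees $\binom{k-1}{i}>0$ for every $i\le D$. The handshake identity gives $\sum_i i\,n_i=dn$, hence $(D+1-d)\,n=\sum_i (D+1-i)\,n_i$. The terms with $i\ge D+1$ have coefficient $D+1-i\le 0$, so discarding them yields $(D+1-d)\,n\le \sum_{i=1}^{D}(D+1-i)\,n_i$. A double count (equivalently, Abel summation) rewrites the right-hand side as $\sum_{j=1}^{D}N_{\le j}$; applying the lemma and exchanging the order of summation turns this into $k\sum_{i=1}^{D}(D+1-i)\binom{k-1}{i}$. Finally $D+1-d\ge 1$ since $D\ge d$, so $n\le (D+1-d)\,n\le k\sum_{i=1}^{D}(D+1-i)\binom{k-1}{i}$, which is exactly the claimed bound. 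Bounding each $\binom{k-1}{i}$ by $k^{D}$ and each coefficient $(D+1-i)$ by $D$ across the $D$ summands then gives the headline estimate $n=\bigo(d^2 k^{\lceil d\rceil+1})$.

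For part (i), I assume $k\le D$. Here every color-neighbor set lives in a $(k-1)$-element set regardless of degree, so the crude global bound $n\le k(2^{k-1}-1)$ of Alcon \textit{et al.} already applies, and it remains only to verify $k(2^{k-1}-1)<\lceil d\rceil\,k^{\lceil d\rceil-1}$; this follows from $2^{k-1}\le k^{k-1}\le k^{D-1}$ together with $k\le D$.

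The final and more delicate point is the characterization of the extremal graphs, which I would obtain by forcing equality throughout the chain used in part (ii). Equality in $n\le (D+1-d)\,n$ forces $D+1-d=1$, i.e.\ $d=D$ is an integer; the vanishing of the discarded terms forces $n_i=0$ for all $i\ge D+2$, i.e.\ $\Delta\le D+1$; and equality in the lemma for every $j\le D$ forces $N_{\le j}=k\sum_{i=1}^{j}\binom{k-1}{i}$, equivalently $n_i=k\binom{k-1}{i}$ for each $i\le D$. I expect the main obstacle to be precisely the design of the weight $(D+1-i)$ that lets the handshake identity and the degree-stratified lemma interact, together with the sign bookkeeping that justifies discarding the high-degree terms. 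A secondary subtlety, which dictates the whole shape of the argument, is that a degree-$i$ vertex may have a color-neighbor set strictly smaller than $i$ whenever colors repeat in its neighborhood; this is why the bound must be phrased through the cumulative quantities $N_{\le j}$ rather than through the individual counts $n_i$, and also why the closed-form bound need not be attained unless $d$ is an integer.
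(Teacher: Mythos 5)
Your proposal is correct and takes essentially the same route as the paper: the same degree-stratified count of distinct pairs $(f(u),N_f(u))$ giving $N_{\le j}\le k\sum_{i=1}^{j}\binom{k-1}{i}$, the same Abel-summation identity $\sum_{i=1}^{\lceil d\rceil}(\lceil d\rceil+1-i)n_i=\sum_{j=1}^{\lceil d\rceil}N_{\le j}$, and the same use of the handshake inequality to absorb high-degree vertices --- the paper merely phrases that last step by bounding the number $d^{+}_{\lceil d\rceil+1}$ of vertices of degree above $\lceil d\rceil$ by $\sum_{i=1}^{\lceil d\rceil}(\lceil d\rceil-i)n_i$, which is algebraically equivalent to your weighting of $n$ by the factor $(\lceil d\rceil+1-d)\ge 1$. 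Your equality analysis also matches the paper's, and your extra observation that attainment forces $d=\lceil d\rceil$ to be an integer is correct (it is implicit in the paper's chain of equalities, though not stated there).
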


Furthermore, we design a construction that shows that the above upper bound is asymptotically almost tight, as follows.

\begin{theorem}~\label{graph construction}
For every integer $\Delta\geq 2$, there exists a connected graph $G$ of maximum degree $\Delta$ of order $n = \Omega \left(\Delta\left(\frac{k}{\Delta-1} \right)^{\Delta+1} \right)$, where $k$ is the neighbor-locating chromatic number of $G$. 
\end{theorem}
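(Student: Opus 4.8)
The plan is to realize, for a given palette of $k$ colors, as many vertices of degree exactly $\Delta$ with pairwise distinct \emph{types} as possible, where the type of a vertex $v$ is the pair $(f(v),N_f(v))$. If $v$ has degree $\Delta$ and a \emph{rainbow} neighborhood (its $\Delta$ neighbors get $\Delta$ distinct colors), then $|N_f(v)|=\Delta$, and the number of available types is exactly $k\binom{k-1}{\Delta}$. Note that it suffices to build a connected graph $G$ of maximum degree $\Delta$ carrying a neighbor-locating $k$-coloring with $n=k\binom{k-1}{\Delta}$ vertices: this gives $\chi_{NL}(G)\le k$, and since the target quantity $\Delta\bigl(\tfrac{k}{\Delta-1}\bigr)^{\Delta+1}$ is increasing in $k$, the bound $n=\Omega\bigl(\Delta(\chi_{NL}(G)/(\Delta-1))^{\Delta+1}\bigr)$ will follow from $n=\Omega\bigl(\Delta(k/(\Delta-1))^{\Delta+1}\bigr)$ without any need to compute $\chi_{NL}(G)$ exactly. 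Thus the whole statement reduces to a construction together with an elementary count.

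I would first build a \emph{disjoint} model. For each $(\Delta+1)$-subset $U\subseteq\{1,\dots,k\}$, take a clique on $\Delta+1$ vertices and color them bijectively by the colors of $U$. Each such clique is $\Delta$-regular, every vertex has a rainbow neighborhood, and a vertex coming from $U$ has type $(c,U\setminus\{c\})$; since distinct sets $U$ yield disjoint families of types, the coloring of the disjoint union by "color within $U$" is already a neighbor-locating $k$-coloring, of maximum degree $\Delta$ and order $(\Delta+1)\binom{k}{\Delta+1}=k\binom{k-1}{\Delta}$. (Indeed, one checks that the "swap" relation $(c,U\setminus\{c\})\sim(b,U\setminus\{b\})$ for $b\in U\setminus\{c\}$ is exactly a disjoint union of cliques $K_{\Delta+1}$, one per $U$, which explains why no degree-$\Delta$ realization can do better than this per-degree count.)

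The main obstacle is \textbf{connectivity}: each clique is $\Delta$-regular, so there are no free degree slots for linking. I would resolve this by \emph{local rewiring} along a spanning connected subgraph of the Johnson graph $J(k,\Delta+1)$ (which is connected, and even has a Hamiltonian path). For two cliques $U,U'$ that share $\Delta\ge 2$ colors, pick two common colors $a,b$, delete the edge between the color-$a$ and color-$b$ vertices inside $U$ and inside $U'$, and add the two cross edges joining the color-$a$ vertex of $U$ to the color-$b$ vertex of $U'$ and vice versa. The crucial point is that every endpoint of a rewired edge still sees exactly the same \emph{set} of colors in its neighborhood (a color-$b$ neighbor is replaced by another color-$b$ neighbor), so all vertex types are preserved and the coloring remains a neighbor-locating $k$-coloring; moreover every degree stays equal to $\Delta$. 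Performing one such swap per edge of a Hamiltonian path of $J(k,\Delta+1)$ connects everything. The fiddly part is bookkeeping: a clique used by two swaps must spend two \emph{distinct} internal edges, of which it has $\binom{\Delta+1}{2}\ge 3$; for $\Delta\ge 3$ the choice of the pair $\{a,b\}$ from the shared $\Delta$-set leaves ample freedom to avoid clashes, while for $\Delta=2$ the cliques are triangles and the procedure must turn the disjoint triangles into a single cycle, which amounts to choosing a Gray-code-type (revolving-door) ordering of the $3$-subsets so that no color added entering a triangle is the one removed when leaving it. I expect this $\Delta=2$ bookkeeping to be the only genuinely delicate point.

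Finally I would carry out the count. Using $\binom{k-1}{\Delta}\ge\bigl(\tfrac{k-1}{\Delta}\bigr)^{\Delta}$ gives
$$n=k\binom{k-1}{\Delta}\ \ge\ k\left(\frac{k-1}{\Delta}\right)^{\Delta},$$
and dividing by the target yields
$$\frac{n}{\Delta\bigl(k/(\Delta-1)\bigr)^{\Delta+1}}\ \ge\ \left(\frac{k-1}{k}\right)^{\Delta}\left(\frac{\Delta-1}{\Delta}\right)^{\Delta+1},$$
which is bounded below by a positive constant (the second factor tends to $1/e$, and the first tends to $1$ for fixed $\Delta$). Hence $n=\Omega\bigl(\Delta(k/(\Delta-1))^{\Delta+1}\bigr)$, and since $\chi_{NL}(G)\le k$ the monotonicity noted above upgrades this to $n=\Omega\bigl(\Delta(\chi_{NL}(G)/(\Delta-1))^{\Delta+1}\bigr)$, as required. (For completeness one can also observe, via Theorem~\ref{th_lower bound_new} applied to $G$, that $\chi_{NL}(G)=\Theta(k)$ for each fixed $\Delta$, so the constructed $k$ is, up to constants, the true neighbor-locating chromatic number.)
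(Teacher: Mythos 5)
Your proposal is correct in substance but follows a genuinely different route from the paper. The paper's proof is iterative: it starts from a path with $\chi_{NL}=s$, and in each of $\Delta-2$ rounds attaches a new vertex to each designated tuple of old vertices, takes $s$ disjoint copies colored with $s$ fresh colors, and restores the neighbor-locating property (and connectivity) by adding matchings among same-colored new vertices via the matrix-matching Lemma~\ref{lem matrix matching} and Corollary~\ref{cor_spanning_supergh}; this yields $n\ge \frac{5}{32}\Delta s^{\Delta+1}$ using at most $(\Delta-1)s$ colors, and it closes with the same monotonicity observation you make (exhibiting a neighbor-locating $k$-coloring gives $\chi_{NL}(G)\le k$, which suffices since the target function is increasing in $k$). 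You instead realize all $k\binom{k-1}{\Delta}$ possible types $(f(v),N_f(v))$ of degree-$\Delta$ vertices with rainbow neighborhoods directly, one properly colored $K_{\Delta+1}$ per $(\Delta+1)$-set of colors, and then make the union connected by degree-, color- and type-preserving two-switches along a Hamiltonian path of the Johnson graph $J(k,\Delta+1)$. Your route buys a quantitatively stronger conclusion: $n=k\binom{k-1}{\Delta}$ is exactly the dominant ($i=\lceil d\rceil$) term of the upper bound in Theorem~\ref{th_lower bound_new}(ii), so for each fixed $\Delta$ your family is optimal up to lower-order terms, improving on the paper's construction by a factor of order $e^{\Delta}$; the cost is the rewiring bookkeeping. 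For $\Delta\ge 3$ that bookkeeping is indeed routine, as you say: each clique on a Hamiltonian path needs at most two distinct internal edges, each swap may choose among $\binom{\Delta}{2}\ge 3$ pairs inside the relevant $\Delta$-element intersection, types are preserved even when a vertex is an endpoint of two swaps, and a clique minus two edges stays connected, so connectivity propagates along the path. The only soft spot is $\Delta=2$, which you correctly flag: there the swapped pair is forced, and beyond the ``no immediate undo'' condition you must also verify that the resulting $2$-regular graph is a single cycle rather than several (this does work out, since the two strands threading each middle triangle are joined only at the two end triangles, but it needs saying, as does the existence of the required no-undo revolving-door ordering of $3$-subsets); alternatively, you can dispatch $\Delta=2$ outright by citing the known fact, used elsewhere in the paper, that cycles of length roughly $k^3/2$ have neighbor-locating chromatic number $k$~\cite{alcon2019neighbor,BA2014}.
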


Note that the above lower bound is also $\Omega\left(d^{-d}k^{d+1}\right)$. It implies that our bound from Theorem~\ref{th_lower bound_new} and the one from~\cite{alcon2020neighbor} are tight up to a multiplicative factor that is a function of $\Delta$ or $d$ (when $d$ is an integer), respectively. In other words, if $\Delta$ or $d$ is considered a fixed constant, our construction shows that these two bounds are tight up to a constant factor.

A natural question that arises, is whether determining the value of the locating chromatic number and the neighbor-locating chromatic number can be done efficiently on sparse graphs. We show that this is not the case, proving that the associated decision problems are NP-complete even on graphs of bounded maximum average degree.

\begin{theorem}\label{k_NLC_NPC}
  The \textsc{L-Coloring} and the \textsc{NL-Coloring} problems are NP-complete even when restricted to $4$-partite graphs of average degree at most 7 and maximum average degree at most 20.
\end{theorem}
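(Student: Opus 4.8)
We first record that both problems lie in \textsc{NP}. Given a $k$-coloring $f$, properness is checked in linear time; for \textsc{NL-Coloring} one computes, for every vertex $x$, the set $N_f(x)\subseteq\{1,\dots,k\}$ and verifies that inside each color class these sets are pairwise distinct, while for \textsc{L-Coloring} one performs a breadth-first search from each color class to obtain the vectors $(d(x,S_1),\dots,d(x,S_k))$ and checks that inside each class they are pairwise distinct. Since two vertices of different colors are automatically distinguished (one of them lies in a class at distance $0$, the other at distance $\ge 1$), only same-colored pairs matter, and both verifications run in polynomial time. For hardness, the plan is to reduce from $q$-\textsc{Coloring} restricted to graphs $H$ of bounded maximum degree for a suitable small constant $q$; this is \textsc{NP}-complete already for small degree bounds, such inputs are sparse, and graphs of maximum degree at most $4$ are (apart from $K_5$) $4$-colorable by Brooks' theorem, which is what will let us keep the output $4$-partite and of bounded density. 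Given $(H,q)$, we build a graph $G$ and an integer $k$ so that $\chi_{NL}(G)\le k$ (respectively $\chi_L(G)\le k$) if and only if $\chi(H)\le q$.

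The engine of the reduction is the following forcing mechanism. If $x$ and $y$ are \emph{false twins}, then $N_f(x)=N_f(y)$ for every coloring $f$; moreover $d(x,z)=d(y,z)$ for all $z\notin\{x,y\}$, so $d(x,S_i)=d(y,S_i)$ for every class not meeting $\{x,y\}$. Hence, in any neighbor-locating (respectively locating) coloring, false twins must receive \emph{distinct} colors. The key point is that a set of mutual false twins is independent, so it does not raise the ordinary chromatic number, yet it forces arbitrarily many colors in an L- or NL-coloring. This is exactly the phenomenon that produces a $4$-partite graph whose (neighbor-)locating chromatic number encodes a hard problem. We therefore attach to $H$ a rigid \emph{reference gadget}, assembled from twin-like structures, which in every valid $k$-coloring occupies a prescribed block of colors and pins down the colors of a set of anchor vertices, leaving as the only remaining freedom a proper coloring of (a decorated copy of) $H$ with the $q$ leftover colors. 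Auxiliary \emph{identification gadgets} then guarantee that the distinguishing condition can be satisfied precisely when this partial coloring is proper.

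For correctness, in the forward direction a proper $q$-coloring of $H$ is combined with the fixed reference coloring to yield a $k$-coloring of $G$; one checks that every pair of equally-colored vertices is distinguished, either because they lie in distinct gadgets seeing different color-neighborhoods, or because the reference gadget breaks the remaining ties. In the backward direction, the rigidity of the reference gadget forces the anchor colors, so that any valid $k$-coloring of $G$ restricts to a proper $q$-coloring of $H$. To make a \emph{single} construction serve both \textsc{L-Coloring} and \textsc{NL-Coloring}, we arrange every critical pair of equally-colored vertices to be distinguished already at distance one, so that being neighbor-distinguished and being metric-distinguished coincide on exactly the pairs that matter, and we give the gadgets small diameter so that no long-range distance effect can create spurious distinctions in the locating variant.

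The delicate point, and the main obstacle, is to achieve all of this under the three simultaneous density constraints. Keeping $G$ $4$-partite is handled by exhibiting an explicit proper $4$-coloring of the whole construction, each gadget being designed to be $4$-colorable and to attach to $H$ without creating a $K_5$. Bounding the \emph{average} degree by $7$ is comparatively easy, since each gadget has constant size and is joined by a bounded number of edges. The real tension is with the \emph{maximum average degree}: the naive way to force many distinct colors is to let many false twins share one common neighborhood, but this creates a locally dense bipartite pattern whose average degree exceeds $20$. The crux is therefore to redesign the forcing gadget so that the required color rigidity is propagated through \emph{low-degree, locally sparse} substructures -- for instance by chaining many small twin-gadgets rather than using a single large twin-set -- while certifying that every subgraph has at most $10$ edges per vertex. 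Once such a bounded-degree, bounded-\textsc{mad} forcing gadget is in place, the correctness argument above applies unchanged and yields \textsc{NP}-completeness within the stated graph class.
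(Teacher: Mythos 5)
Your proposal correctly isolates the right engine---false twins must receive distinct colors under both locating and neighbor-locating colorings, so large independent twin-like structures can force many colors without raising $\chi$---and your instinct to reduce from coloring of bounded-degree (planar) graphs matches the paper, which reduces from \textsc{$3$-Coloring} on planar graphs of maximum degree $4$. But as a proof there is a genuine gap: the reduction is never actually defined. You never specify $k$, nor the ``reference gadget,'' ``anchor vertices,'' or ``identification gadgets''; the forward and backward correctness arguments are asserted for objects that do not yet exist; and the step you yourself call the crux---``redesign the forcing gadget so that the required color rigidity is propagated through low-degree, locally sparse substructures, for instance by chaining many small twin-gadgets''---is exactly the content that is missing, since you conclude only that \emph{once} such a gadget exists the argument goes through. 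Note also an internal tension you do not resolve: $k$ must grow with the input, because by Theorem~\ref{th_lower bound_new} a connected graph with average degree at most $7$ and $\chi_{NL}\leq k$ has $\bigo(k^{8})$ vertices, so a reduction with constant $k$ in this sparse class would admit only finitely many yes-instances; this is hard to reconcile with your claims that ``each gadget has constant size'' and that the reference gadget occupies a ``prescribed block of colors.''

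Moreover, the obstacle you flag as the main difficulty is illusory, and the paper's construction shows why. It sets $k=n+3$ and uses precisely the ``single large twin-set'' design you reject: each vertex $u_i$ of $G$ receives a gadget $X_i$ with a hub $x_i$, a twin-set $A_i$ of $n-1$ vertices and a pendant twin-set $B_i$ of $n+2$ vertices, together with a twin copy $G'$ of $G$ (making $u_i,u'_i$ false twins) and a global triple $Y$. Large twin-sets whose members have \emph{bounded degree} (each vertex of $A_i$ has degree $6$, each vertex of $B_i$ degree $1$) are locally sparse: Lemma~\ref{G*_mad20} certifies $\mathrm{mad}(G^*)\leq 20$ by an explicit edge decomposition of $G^*$ into one part of maximum degree $8$ and three triangle-free planar parts ($8+4+4+4$), with no chaining needed. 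Four-partiteness is obtained by applying the Four-Color Theorem to the planar source graph and extending explicitly (your Brooks-based variant would also serve for maximum-degree-$4$ inputs other than $K_5$). Finally, for the locating variant the delicate direction is the backward one, and the paper handles it concretely where you only gesture at ``small diameter'': same-colored hubs $x_i,x_j$ are at distance $1$ from every other color class, hence cannot be metric-distinguished, which forces the same rigidity as in the neighbor-locating case (Lemma~\ref{3col_LC_backward}). In short, your outline has the correct ingredients but stops where the proof must start; the paper shows the naive twin-set approach already meets all three density constraints.
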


\paragraph{Organization of the paper.} In Section~\ref{Basic results}, we study the connected graphs with prescribed values of chromatic number, locating chromatic number, and neighbor-locating chromatic number. We also study the relation between the locating chromatic number (resp., neighbor-locating chromatic number) of a graph and its induced subgraphs. In particular, we prove Theorems~\ref{theorem_pqr} and~\ref{theorem_subgh_gh_gap} in this section. In Section~\ref{bounds}, we provide an upper bound on the number of vertices of a sparse graph in terms of neighbor-locating chromatic number by proving Theorem~\ref{th_lower bound_new}. 
In Section~\ref{tightness}, we prove that the obtained upper bound is almost tight by proving Theorem~\ref{graph construction}. Finally, in Section~\ref{NP_comp}, we prove that the \textsc{L-Coloring} and the \textsc{NL-Coloring} problems are NP-complete for sparse graphs; more precisely, for graphs that are $4$-partite, have average degree at most~7, and maximum average degree at most~20. In particular, we prove Theorem~\ref{k_NLC_NPC}. 

\medskip

\noindent \textbf{Note:} A preliminary version of this work (without the NP-completeness proofs, without the figures, and with less detailed proofs and statements) appeared in the proceedings of the CALDAM 2023 conference~\cite{chakraborty2023new}.

\section{Gaps among $\chi(G)$, $\chi_L(G)$ and $\chi_{NL}(G)$}~\label{Basic results}
The first result we would like to prove involves three different parameters, namely, the chromatic number, the locating chromatic number, and the neighbor-locating chromatic number.

\medskip

\noindent
\textit{\textbf{Proof of Theorem~\ref{theorem_pqr}.}}
First of all, let us assume that $p=q=r$. In this case, 
for $G_{p,q,r} = K_p$, it is trivial to note that 
$\chi(G_{p,q,r})=\chi_L(G_{p,q,r})=\chi_{NL}(G_{p,q,r})=p$.
This completes the case when $p=q=r$.

\medskip

Second of all, let us handle the case when $p < q=r$. 
If $2 = p < q = r$, then take $G_{p,q,r}= K_{1,q-1}$. Therefore, we have
 $\chi(G_{p,q,r})=2$ as it is a bipartite graph, and it is known that $\chi_L(G_{p,q,r})=\chi_{NL}(G_{p,q,r})=q$~\cite{alcon2020neighbor,Chartrand200289}.

If $3 \leq p < q = r$, then we construct $G_{p,q,r}$ as follows: start with a complete graph $K_p$, on vertices $v_0, v_1, \cdots, v_{p-1}$, take $(q-1)$ new vertices $u_1, u_2, \cdots, u_{q-1}$, and make them adjacent to $v_0$.  It is trivial to note that $\chi(G_{p,q,r})=p$ in this case. Moreover, note that we need to assign $q$ distinct colors to $v_0, u_1, u_2, \cdots, u_{q-1}$ under any locating or neighbor-locating coloring. On the other hand, $f(v_i) = i+1$ and $f(u_j) = j+1$ is a valid locating $q$-coloring as well as neighbor locating $q$-coloring of $G_{p,q,r}$. Thus we are done with the case when $p < q=r$. 

\medskip

Thirdly, we will consider the case when $p=q < r$. 
If $3 = p = q < r$, then let $G_{p,q,r} = C_n$ where $C_n$ is an odd cycle of  suitable length, that is, a length which will imply 
$\chi_{NL}(C_n)=r$. It is known that such a cycle exists~\cite{alcon2019neighbor,BA2014}. 
As we know that $\chi(G_{p,q,r})=3$, 
$\chi_L(G_{p,q,r})=3$~\cite{Chartrand200289}, and 
$\chi_{NL}(G_{p,q,r})=r$~\cite{alcon2019neighbor,BA2014}, we are done. 

If $4 \leq p = q < r$,  
then we construct $G_{p,q,r}$ as follows: start with a complete graph $K_p$ on vertices $v_0, v_1, \cdots, v_{p-1}$, and an odd cycle $C_n$ on vertices $u_0, u_1, \cdots, u_{n-1}$, and identify the vertices $v_0$ and $u_0$. 
Moreover, we say that the length of the odd cycle $C_n$ is a suitable length, that is, it is of a length which ensures 
$\chi_{NL}(C_n)=r$. 
Notice that $\chi(G_{p,q,r})=p$. 
A locating coloring $f$ can be assigned to $G_{p,q,r}$ as follows: $f(v_i)=i+1$, $f(u_j)=a$ for odd integers $1\le j\le n-1$ and $f(u_l)=b$ for even integers $2\le l\le n-1$, where $a,b\in \{2, 3, \dots,p\}$. A vertex $v_i\in K_p$ (other than $v_0$) and a vertex $u_j \in C_n$ such that $f(v_i)=f(u_j)$ are metric-distinguished with respect to $f$ since $d(v_i,S_l)=1\neq d(u_j,S_l)$ for at least one $l\in \{2, 3, \dots,p\}\setminus \{a,b\}$. Thus, $\chi_L(G_{p,q,r})=p$. 
On the other hand, as the neighborhood of the vertices of the cycle $C_n$ (subgraph of $G_{p,q,r}$) 
does not change if we consider it as an induced subgraph except for the vertex $v_0 = u_0$. Thus, we will need at least $r$ colors to color $C_n$ while it is contained inside $G_{p,q,r}$ as a subgraph. Assign a neighbor-locating coloring $c$ to $G_{p,q,r}$ as follows: assign $p$ distinct colors to the complete graph $K_p$. Use $p$ colors from $K_p$ and $r-p$ new colors to provide a neighbor-locating coloring to the odd cycle $C_n$. A vertex $v_i\in K_p$ (other than $v_0$) and a vertex $u_j\in C_n$ such that $c(v_i)=c(u_j)$ are neighbor-distinguished with respect to $c$ since $v_i$ has $p-1$ distinct color neighbors whereas $u_j$ can have at most two distinguished color neighbors. Hence $\chi_{NL}(G_{p,q,r})=r$. Thus, we are done in this case also. 

\medskip

Finally, we are into the case when $p < q < r$. If $2=p<q<r$, then refer~\cite{mojdeh2022conjectures} for this case.
If $3 = p < q <r$, then we start with an odd cycle $C_n$ on $n$ vertices
$v_0, v_1, v_2, \cdots, v_{n-1}$. Here, let $k=\frac{r(r-1)(r-2)}{2}$ and
\begin{equation*}
    n=
    \begin{cases}
    k  &\text{ if $k$ is odd,}\\
    k-1 &\text{ if $k$ is even.}
    \end{cases}
\end{equation*}

It is known that $\chi_{NL}(C_n)=r$ from~\cite{alcon2019neighbor,BA2014}. Take $q-1$ independent vertices $u_1, u_2, \cdots, u_{q-1}$ and make all of them adjacent to $v_0$. This so obtained graph is $G_{p,q,r}$. It is trivial to note that $\chi(G_{p,q,r})=3$ in this case. Note that we need to assign $q$ distinct colors to $v_0,u_1, u_2, \cdots, u_{q-1}$ under any locating or neighbor-locating coloring. 
Now, we assign a locating coloring $f$ to $G_{p,q,r}$ as follows:
\begin{equation*}
    f(v_i)=
    \begin{cases}
        1 & \text{if $i=0$,}\\
        2 & \text{if $i$ is odd and $1\le i\le n-1$,}\\
        3 & \text{if $i$ is even and $2\le i \le n-1$.}
    \end{cases}
\end{equation*}

Also, $f(u_j)=j+1$ for all $1\le j\le q-1$. This gives us $\chi_L(G_{p,q,r})=q$. 
On the other hand, as the neighborhood of the vertices of the cycle $C_n$ (subgraph of $G_{p,q,r}$) 
does not change if we consider it as an induced subgraph except for the vertex $v_0$. Thus, we will need at least $r$ colors to color $C_n$ while it is contained inside $G_{p,q,r}$ as a subgraph. Assign a neighbor-locating $r$-coloring $c$ to $G_{p,q,r}$ as follows:
assign a neighbor-locating $r$-coloring to the odd cycle $C_n$ such that each vertex has two distinct color neighbors in case of $n=k$, and all vertices except the two vertices, say $v_i$ and $v_j$, have two distinct color neighbors in case of $n=k-1$ (refer~\cite{alcon2019neighbor} for such a neighbor-locating $r$-coloring). Assign distinct colors to the $q-1$ leaf vertices by choosing any $q-1$ colors from $r$ colors (except $c(v_i)$ and $c(v_j)$ in case of $n=k-1$) given to the cycle $C_n$. A vertex $v_i$ in the cycle and a leaf vertex $u_j$ such that $f(v_i)=f(u_j)$ are neighbor distinguished since $v_i$ has two distinct color neighbors whereas $u_j$ has only one color neighbor. Hence we have $\chi_{NL}(G_{p,q,r})=r$.

If $4 \leq p < q <r$, then we start with 
a path $P_n$ on $n$ vertices, where $n=\frac{r(r-1)(r-2)}{2}$. 
 It is known that $\chi_{NL}(P_n)=r$ from~\cite{alcon2019neighbor,BA2014}. 
Let $P_n = u_0 u_1 \cdots u_{n-1}$. Now let us take a complete graph on $p$ vertices $v_0, v_1, \cdots, v_{p-1}$. Identify the two graphs at $u_0$ and $v_0$ to obtain a new graph. Furthermore, take $(q-2)$ independent vertices 
$w_1, w_2, \cdots, w_{q-2}$ and make them adjacent to $u_{n-2}$. This so obtained graph is $G_{p,q,r}$. It is trivial that $\chi(G_{p,q,r})=p$. Note that under any locating or neighbor-locating coloring, $q$ distinct colors have to be given to the vertices $u_{n-2}, w_1, w_2, \cdots, w_{q-2}, u_{n-1}$. Now, define a locating coloring $f$ of $G_{p,q,r}$ as follows: 
\begin{equation*}
    f(u_i)=
    \begin{cases}
        1 & \text{if $i=0$ or $i=n-3$,}\\
        2 & \text{if $i$ is odd and $1\le i\le n-4$,}\\
        3 & \text{if $i$ is even and $2\le i\le n-4$,}\\
        3 & \text{if $i=n-2$ and $n$ is odd,}\\
        2 & \text{if $i=n-2$ and $n$ is even,}\\
        2 & \text{if $i=n-1$ and $n$ is odd,}\\
        3 & \text{if $i=n-1$ and $n$ is even.}
    \end{cases}
\end{equation*}

Further, assign the colors $1,4,5,6,\dots,q$ to the leaf vertices and $f(v_j)=j+1$ for all $1\le j \le p-1$. Thus, $\chi_L(G_{p,q,r})=q$. 

Moreover, the neighborhood of the vertices of the path $P_n$ (subgraph of $G_{p,q,r}$) does not change if we consider it as an induced subgraph except for the vertices $u_0$ and $u_{n-2}$. Recall that, for a path $P_n$ on $n$ vertices with $\frac{(r-1)^2(r-2)}{2}< n \le \frac{r^2(r-1)}{2}$, we have $\chi_{NL}(P_n)=r$~\cite{alcon2019neighbor,BA2014}. As $n-3=\frac{r(r-1)(r-2)-6}{2}>\frac{(r-1)^2(r-2)}{2}$, where $r\ge 6$, at least $r$ colors are required for neighbor-distinguishing the vertices $u_1, u_2, \cdots, u_{n-3}$ in $G_{p,q,r}$.

Assign a neighbor-locating $r$-coloring $c$ to $G_{p,q,r}$ as follows: 
assign a neighbor-locating $r$-coloring to the path $P_n$ such that each vertex (except the end vertices $u_0$ and $u_{n-1}$) has two distinct color-neighbors (refer~\cite{alcon2019neighbor} for such a neighbor-locating $r$-coloring). Choose any $p-1$ distinct colors from $r$ colors (except $c(u_0)$) used in neighbor-locating $r$-coloring of $P_n$ and assign them to the remaining $p-1$ vertices of the complete graph.
Assign distinct colors to the $q-2$ leaf vertices by choosing any $q-2$ colors from $r$ colors of $P_n$ except the colors $c(u_0)$, $c(u_{n-2})$ and $c(u_{n-1})$. A vertex $u_i$ ($i\neq 0, n-2,n-1$) on the path $P_n$ and a leaf vertex $w_j$ such that $c(u_i)=c(w_j)$ are neighbor distinguished since $u_i$ has two distinct color neighbors whereas $w_j$ has only one color neighbor. Hence, we have $\chi_{NL}(G_{p,q,r})=r$. 
\qed

\bigskip

Furthermore, we show that, unlike the case of the ordinary chromatic number,  an induced subgraph can have an arbitrarily  higher locating chromatic number (resp., neighbor-locating chromatic number) than that of the original graph.

\bigskip

\noindent
\textit{\textbf{Proof of Theorem~\ref{theorem_subgh_gh_gap}.}}
The graph $G_k$ is constructed as follows. 
We start with $2k$ disjoint $K_1$s named $a_1, a_2, \cdots, a_{2k}$ and $k$ disjoint $K_2$s named $b_1b'_1, b_2b'_2, \cdots, b_kb'_k$. After that, we make all the above mentioned vertices adjacent to a special vertex $v$ to obtain our graph $G_k$. 
Notice that $v$ and the $a_i$s must all receive distinct colors under any locating coloring or neighbor-locating coloring.  On the other hand, 
the coloring $f$ given by $f(v) = 1$, $f(a_i) = i+1$, $f(b_i) = 2i+1$, and 
$f(b'_i) = 2i$ is indeed a locating coloring as well as a 
neighbor-locating coloring of $G_k$.   
Hence we have 
$\chi_L(G_k) = \chi_{NL}(G_k) = 2k+1$.

Now take $H_k$ as the subgraph induced by $v$, $a_i$s and $b_i$s. It is 
the graph $K_{1,3k}$.
Hence, we have 
$\chi_L(H_k) = \chi_{NL}(H_k) = 3k+1$~\cite{alcon2020neighbor,Chartrand200289}. 
\qed

\section{Bounds for sparse graphs}~\label{bounds}
In this section, we study the density of graphs having bounded neighbor-locating chromatic number. 
The first among those results provides an upper bound on the number of vertices of a graph in terms of its neighbor-locating chromatic number. This, in particular shows that the number of vertices of a graph $G$ is bounded above by 
a polynomial function of $\chi_{NL}(G)$.

\bigskip

\noindent

\textit{\textbf{Proof of Theorem~\ref{th_lower bound_new}.}}
We can assume that $k\geq 2$, for otherwise $G$ has only one vertex. 

\medskip

\noindent (i) First of all, assume that $k\leq \lceil d \rceil$. 
We know from~\cite{alcon2020neighbor} that $n \leq k(2^{k-1}-1)$, and thus:
\begin{align*}
n & < \lceil d \rceil 2^{\lceil d \rceil -1}\\
& \leq \lceil d \rceil k^{\lceil d \rceil-1},
\end{align*}
and the desired bound holds.  Moreover, we clearly have $n = \bigo (d^2k^{\lceil d \rceil +1})$ in this case. 

\medskip

\noindent (ii) For the remainder of the proof, we thus assume that $k\geq \lceil d \rceil +1$ and for convenience, we let $\lceil d \rceil = a$.

Let $D_i$ and $d_i$ denote the set and the number of vertices in $G$ having degree equal to $i$, respectively, and let $D_i^+$ and $d_i^{+}$ denote the set and the number of vertices in $G$ having degree at least $i$, respectively, for all $i \geq 1$. 
As $G$ is connected and hence, does not have any vertex of degree $0$, it is possible to write 
\begin{equation}~\label{eq1}
    \sum_{v \in V(G)} deg(v) = \sum_{i=1}^{a} i \cdot d_i + \sum_{v \in D_{a}^+} deg(v)
\end{equation}
and the number of vertices of $G$ can be expressed as 
\begin{equation}~\label{eq2}
    n = (d_1 + d_2 + \cdots + d_{a}) + d_{a+1}^+ = d_{a+1}^+ + \sum_{i=1}^{a} d_{i}.
\end{equation}
As $\sum_{v \in V(G)} deg(v) \leq an$,
combining equations~(\ref{eq1}) and (\ref{eq2})  we have 
\begin{equation}\label{eq3}
    \sum_{i=1}^{a} i \cdot d_i + \sum_{v \in D_{a+1}^+} deg(v)
                \leq a \left( d_{a+1}^+ + \sum_{i=1}^{a} d_{i}\right) = ad_{a+1}^+ + a \sum_{i=1}^{a} d_{i} 
\end{equation}
which implies
\begin{align}\label{eq4}
  d_{a+1}^+ & \leq \sum_{v \in D_{a+1}^+}  \left( deg(v) - a \right) \nonumber\\
  \nonumber
  & =  \left(\sum_{v \in D_{a+1}^+} deg(v)\right) - a d_{a+1}^+ \\
  & \leq \sum_{i=1}^{a} (a-i)d_{i}.   
\end{align}
The first inequality follows from
the fact that  there are exactly $d_{a+1}^+$ terms in the summation $\sum_{v \in D_{a+1}^+} \left( deg(v) - a \right)$, where each term is greater than or equal to $1$, as $deg(v) \geq a +1 $ for all $v \in D_{a+1}^+$. The second inequality can be obtained by rearranging Inequation~(\ref{eq3}). 

Let $f$ be any neighbor-locating $k$-coloring of $G$. Consider an ordered pair $(f(u), N_f(u))$, where $deg(u) \leq s$, for some integer 
$s \leq k-1$. 
 Thus, $u$ may receive one of the $k$ available colors, while its color neighborhood may consist of at most $s$ of the remaining $(k-1)$ colors. Therefore, there are at most 
 $k \sum\limits_{i=1}^{s} {k-1 \choose i}$ choices for the ordered pair $(f(u), N_f(u))$. 
Note that for any two vertices $u,v$ of degree at most $s$, the ordered pairs $(f(u), N_f(u))$ and $(f(v), N_f(v))$  must be distinct.
Hence:
\begin{equation}\label{eq5}
    \sum_{i=1}^s d_i \leq k \sum_{i=1}^{s} {k-1 \choose i}.
\end{equation}
Since $a \leq k-1$ by assumption, using the above relation, we can derive  that 
\begin{align}\label{eq6}
    \sum_{i=1}^{a} (a+1-i)d_i & = \sum_{s=1}^{a} \left( \sum_{i=1}^s d_i \right) \nonumber\\ 
    & \leq \sum_{s=1}^{a} \left(k \sum_{i=1}^{s} {k-1 \choose i} \right) \nonumber\\
    &= k \sum_{i=1}^{a} (a+1-i) {k-1 \choose i}.
\end{align}
In the above inequation, the two equalities are algebraic identities, while the inequality is obtained using Inequality~(\ref{eq5}). 
Therefore, 
\begin{align*}
n & = d_{a+1}^+ + \sum_{i=1}^{a} d_{i} \\
& \leq \sum_{i=1}^{a} (a-i)d_{i} + \sum_{i=1}^{a}d_i \text{ [using Inequality~(\ref{eq4})]} \nonumber\\
& = \sum_{i=1}^{a} (a+1-i)d_{i} \\
& \leq k \sum_{i=1}^{a} (a+1-i) {k-1 \choose i} \text{ [using Inequality~(\ref{eq6})]}\\
& < k \sum_{i=1}^{a}\left( a{k-1 \choose i}\right)\\
& < ak \sum_{i=1}^{a} k^i \\
& < a^2k^{a+1} \\
& \leq (d+1)^2k^{\lceil d \rceil +1}\\
& = \bigo (d^2k^{\lceil d \rceil +1}).
\end{align*}

In particular, we have the desired bound from the first part of (ii). Moreover, we also have the general bound $n = \bigo (d^2k^{\lceil d \rceil +1})$ in this case. Thus, we are left with only proving the  second part of (ii).

For the proof of the second part of (ii), 
we notice that if the order of a graph $G^*$ fulfilling the constraints of (ii) attains the upper bound, then equality holds in all of the above inequations. In particular, we must have $d_{a+1}^+ = \sum_{v \in D_{a+1}^+} (deg(v)-a)$ which implies that $G^*$ cannot have a vertex of degree more than $a+1$. Moreover, we also have the following equality.
$$\sum_{i=1}^s d_i = k \sum_{i=1}^{s} {k-1 \choose i} \text{ for $s=1, 2, \ldots , a+1$}$$
which implies that $G^*$ has exactly $k{k-1 \choose i}$ vertices of degree $i$.
\qed

The above bound applied to the class of planar graphs (whose average degree is less than $6$) gives us the following upper bound. 
$$n\le k\sum_{i=1}^{6}(7-i){k-1\choose i}=\bigo(k^7).$$

\section{Tightness of the obtained bound: proof of Theorem~\ref{graph construction}}\label{tightness}
Next, we show the asymptotic tightness of Theorem~\ref{th_lower bound_new}. 

The proof of Theorem~\ref{graph construction} is contained within a number of observations and lemmas. Also, the proof is constructive, and the constructions depend on particular partial colorings. Therefore, we are going to present a series of graph constructions, their particular colorings, and their structural properties. We are also going to present the supporting observations and lemmas in the following. As the proof is a little involved, we start with the following overview.

The final graph $G$ will be built through constructing a sequence of graphs in a number of iterations. Firstly, we take the base graph $G_1$ as a path on a certain number of vertices
(the number of vertices is decided and declared based on arguments mentioned inside the proof) 
with neighbor-locating chromatic number $s$.

We also fix a particular neighbor-locating $s$-coloring of $G_1$. 
Based on this fixed
neighbor-locating $s$-coloring, we will add some vertices and edges to construct $G_2$. Simultaneously to adding the new vertices and edges, we will extend the neighbor-locating $s$-coloring to a neighbor-locating coloring with more than $s$ colors (the exact value of increment in $s$ is declared and explained in the proof). 
Similarly, we will continue to build $G_{i+1}$ from $G_{i}$ to eventually construct the relevant example $G$.

\medskip

\begin{lemma}\label{lem matrix matching}
For two positive integers $p,q$ with $p<q$, consider a $(p \times q)$ matrix whose $ij^{th}$ entry is $m_{i,j}$.  Let $M$ be a complete graph whose  vertices are the entries of the matrix. Then there exists a 
matching of $M$ satisfying the following conditions:
\begin{enumerate}[(i)]
    \item The endpoints of an edge of the matching are from different columns. 
    \item Let $e_1$ and $e_2$ be two edges of the matching. If one endpoint of $e_1$ and $e_2$ are from the same column, then the other endpoints of them must belong to distinct columns. 
    \item The matching saturates all but at most one vertex of $M$ per column. 
\end{enumerate}
\end{lemma}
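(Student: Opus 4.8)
The plan is to reduce the construction of the desired matching to finding a suitable auxiliary graph on the columns, which I will call the \emph{column graph}. Label the columns $1,\dots,q$, each containing the $p$ entries lying in it. Given any matching of $M$ satisfying~(i), each matching edge joins two entries in two distinct columns $c$ and $c'$; record this as an undirected edge $\{c,c'\}$ of a graph $H$ on the vertex set $\{1,\dots,q\}$. I would first observe that condition~(ii) is exactly equivalent to $H$ having no repeated edge, i.e.\ to $H$ being a \emph{simple} graph: two distinct matching edges both joining columns $c$ and $c'$ would have two endpoints in column $c$ whose partners both lie in column $c'$, contradicting~(ii), and the converse is immediate. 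Condition~(i) guarantees that $H$ has no loops. Finally, since a matching uses distinct entries of a given column $c$ for distinct incident matching edges, the number of saturated entries of column $c$ equals $\deg_H(c)$; hence condition~(iii) is equivalent to requiring $\deg_H(c)\ge p-1$ for every column $c$.

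Conversely, I would show that any simple graph $H$ on $\{1,\dots,q\}$ of maximum degree at most $p$ can be realised as such a matching. The key point is that, since column $c$ has exactly $p$ entries and $\deg_H(c)\le p$, one may fix for each column $c$ an injection $\phi_c$ from the set of edges of $H$ incident to $c$ into the set of entries of column $c$, and these injections can be chosen independently across columns. Assigning to each edge $e=\{c,c'\}$ of $H$ the matching edge $\{\phi_c(e),\phi_{c'}(e)\}$ then yields a genuine matching (injectivity of each $\phi_c$ ensures no entry is reused), and conditions~(i)--(iii) follow from the properties of $H$ recorded above. Thus it suffices to produce a simple graph $H$ on $q$ vertices all of whose degrees lie in $\{p-1,p\}$.

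To finish, I would build $H$ as a near-regular graph, using the hypothesis $p<q$, which gives $p\le q-1$ so that both $p-1$ and $p$ are admissible degrees for a simple graph on $q$ vertices. I would then split on the parity of $p$. If $p$ is even, then $pq$ is even and a $p$-regular graph on $q$ vertices exists, saturating every entry. If $p$ is odd, then $p-1$ is even, so $(p-1)q$ is even and a $(p-1)$-regular graph on $q$ vertices exists, leaving exactly one unsaturated entry per column. In either case the standard existence result for regular graphs---a $d$-regular simple graph on $q$ vertices exists whenever $0\le d\le q-1$ and $dq$ is even, realisable for instance by a suitable circulant graph---provides the required $H$, and translating it back by the realisation above completes the proof.

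The main obstacle, I expect, is not any single hard computation but getting the two translations exactly right: verifying that condition~(ii) is precisely the statement ``$H$ is simple'' (and not something weaker), and handling the parity of $dq$ so that the near-regular column graph genuinely exists. The realisation direction, via independent per-column injections, is the other place where a little care is needed to confirm that the resulting edge set is indeed a matching satisfying all three conditions.
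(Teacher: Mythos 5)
Your proof is correct, but it takes a genuinely different route from the paper's. The paper argues by a bare-hands explicit construction: it matches $m_{(2i-1),j}$ with $m_{2i,\,i+j}$ (indices taken modulo $q$, with representatives $1,\dots,q$) for $1\le i\le \lfloor p/2\rfloor$ and $1\le j\le q$, and verifies (i)--(iii) directly by modular arithmetic, the key inequality being $0<i+i'\le p<q$, which rules out the coincidences forbidden by (ii); when $p$ is odd the $p$-th row is left unsaturated, giving (iii). You instead prove a structural equivalence: matchings satisfying (i)--(iii) correspond exactly to simple graphs $H$ on the $q$ columns with all degrees in $\{p-1,p\}$ --- condition (ii) is precisely simplicity of the column multigraph, (iii) is the degree lower bound, and your per-column injections $\phi_c$ (which exist since $\deg_H(c)\le p$) realize any such $H$ as a matching, with (ii) recovered because distinct edges of a simple graph at a common vertex have distinct other endpoints --- and then you invoke the standard existence of $d$-regular graphs on $q$ vertices when $dq$ is even, splitting on the parity of $p$. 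Both parity cases check out ($p\le q-1$ when $p$ is even, and $p-1$ is even when $p$ is odd), and your realization step correctly ensures vertex-disjointness via injectivity of each $\phi_c$. In fact the paper's pairing is exactly the circulant instantiation $H=C_q(1,\dots,\lfloor p/2\rfloor)$ of your column graph, which is $2\lfloor p/2\rfloor$-regular, so your argument subsumes it. What your abstraction buys: it isolates precisely what is needed (any near-$p$-regular simple $H$ works), replaces index-juggling with transparent verifications, and yields slightly more --- e.g., when $p$ is odd but $q$ is even you could take $H$ to be $p$-regular and saturate every vertex, which the paper's construction never does. What the paper's explicitness buys: a self-contained two-line definition of the matching with no appeal to an external existence theorem, which is convenient given that the matching must later be embedded concretely in the iterative construction of the graphs $G_i$.
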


\begin{proof} 
The matching consists of edges of the type 
$m_{(2i-1),j}m_{2i,i+j}$ for all $i \in \{1, 2, \cdots, \lfloor \frac{p}{2} \rfloor \}$ and 
$j \in \{1, 2, \cdots, q\}$. Note that throughout this proof, we will consider the addition and subtraction operations on the indices modulo $q$, where the representatives of the integers modulo $q$ are 
$1,2, \cdots, q$. 
We will show that this matching satisfies all the above-listed conditions.

First observe that, a typical edge of the matching is of the form 
$m_{(2i-1),j}m_{2i,i+j}$. That means the endpoints of the edge in question are from column $j$ and column $i+j$, respectively. As 
$$0 < i \leq \left\lfloor \frac{p}{2} \right\rfloor < p <  q,$$ 
we must have $j \neq i+j$.  
Thus the condition~$(i)$ 
 from the statement is verified. 
 
 Next suppose that there are two edges of the type 
 $m_{(2i-1),j}m_{2i,i+j}$ and 
 $m_{(2i'-1),j'} m_{2i',i'+j'}$. 
 If $m_{(2i-1),j}$ and $m_{(2i'-1),j'}$ are from the same column, 
 that is, $j = j'$, then we must have $i \neq i'$ as they are different vertices. 
 This will imply that the other endpoints $m_{2i,i+j}$ and $m_{2i',i'+j'}$ are from different columns as 
 $i+j \neq i'+j = i'+j'$. 
 On the other hand, if $m_{(2i-1),j}$ and $m_{2i',i'+j'}$ are from the same column, 
 then we have $j = i'+j'$. Moreover, if
 $j' = i+j$, then it will imply 
 $$j = i'+j' = i'+i+j. $$
 This is only possible if $q$ divides $ (i+i')$, which is not possible as 
 $$0 < i+i' \leq 2\left\lfloor \frac{p}{2} \right\rfloor \leq p < q.$$ 
 Therefore, we have verified condition~$(ii)$ of the statement as well. 

Notice that, the matching saturates all the vertices of $M$ when $p$ is even,
 whereas it saturates all except the vertices in the $p^{th}$ row of the matrix when $p$ is odd. This verifies condition~$(iii)$ of the statement. 
\end{proof}

\begin{corollary}~\label{cor_spanning_supergh}
For two positive integers $p,q$ with $p<q$,
let $G$ be a graph with an independent set $M$ of size $(p \times q)$, where $M = \{m_{ij}: 1 \leq i \leq p, 1 \leq j \leq q\}$. Moreover, let $\phi$ be a proper $(s'+q)$-coloring of $G$ satisfying the following conditions:
\begin{enumerate}[(i)]
    \item all $s'$ colors are assigned to the vertices in $G\setminus M$, 

    \item for any vertex $x$ of $G$, $s'+1 \leq \phi(x) \leq s'+ q$ if and only if $x \in M$,
    
    \item for any two vertices $x,y$ of $G$, $x$ and $y$ are neighbor-distinguished unless 
    both belong to $M$,
    
    \item for all $i,j$ with $1 \leq i \leq p$ and $1 \leq j \leq q$, $\phi(m_{ij}) = s'+j$.
\end{enumerate}
Then it is possible to find a spanning supergraph $G'$ of $G$ by adding a matching between the vertices of $M$ which will make $\phi$ a neighbor-locating $(s'+q)$-coloring of $G'$.

\end{corollary}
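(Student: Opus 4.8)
The plan is to take the matching guaranteed by Lemma~\ref{lem matrix matching}, applied to $M$ viewed as the $(p\times q)$ matrix with entry $m_{ij}$ in row $i$ and column $j$ (the hypothesis $p<q$ is exactly what that lemma requires), add its edges to $G$ to obtain the spanning supergraph $G'$, and then verify that $\phi$ is a neighbor-locating $(s'+q)$-coloring of $G'$. The key structural fact I would exploit is that, by conditions (i), (ii) and (iv) of the statement together with the independence of $M$, every neighbor of an $M$-vertex in $G$ lies in $G\setminus M$ and hence carries a color in $\{1,\dots,s'\}$, whereas every matching edge joins two $M$-vertices and hence contributes a \emph{new} color-neighbor lying in the ``high'' range $\{s'+1,\dots,s'+q\}$. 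These two ranges are disjoint, so the matching edges can only help, never destroy, the neighbor-distinguishing already guaranteed.

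First I would check that $\phi$ stays proper on $G'$: a typical matching edge joins two entries from distinct columns (condition (i) of Lemma~\ref{lem matrix matching}), and by condition (iv) of the statement entries from distinct columns receive distinct colors, so no monochromatic edge is created. Next I would dispatch the easy pairs of vertices. If $x,y\notin M$, then adding edges inside $M$ leaves their neighborhoods, hence their color-neighbor sets, unchanged, so they remain neighbor-distinguished by condition (iii). If exactly one of $x,y$ lies in $M$, then by condition (ii) one of them has color at most $s'$ and the other at least $s'+1$, so they are separated by their own colors. Finally, if $x,y\in M$ lie in different columns, then $\phi(x)\neq\phi(y)$ by condition (iv).

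The remaining, and only delicate, case is a pair $x,y\in M$ in the \emph{same} column $j$, where $\phi(x)=\phi(y)=s'+j$, so the two must be separated by their color-neighbor sets in $G'$. Here I would use the disjoint-range observation: since $M$ is independent in $G$, the color-neighbors of $x$ and of $y$ arising from $G$ lie entirely in $\{1,\dots,s'\}$, while each vertex is incident to \emph{at most one} matching edge and so gains \emph{at most one} color-neighbor in $\{s'+1,\dots,s'+q\}$. By condition (iii) of Lemma~\ref{lem matrix matching}, at most one vertex per column is left unsaturated, so at least one of $x,y$, say $x$, is matched. If $y$ is unmatched, then $x$ has a high-range color-neighbor and $y$ has none, separating them. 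If both are matched, their matching partners lie in distinct columns by condition (ii) of Lemma~\ref{lem matrix matching}, so $x$ and $y$ acquire \emph{distinct} single high-range color-neighbors, whence their color-neighbor sets differ in the high range. In every case $x$ and $y$ are neighbor-distinguished, so $\phi$ is neighbor-locating on $G'$.

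The main obstacle I anticipate is precisely that neighbor-locating colorings are not monotone under edge additions: inserting the matching alters the color-neighbor sets of the $M$-vertices, so one must argue that this alteration cannot spoil the separations promised by condition (iii) of the statement. The disjointness of the two color ranges $\{1,\dots,s'\}$ and $\{s'+1,\dots,s'+q\}$, combined with the two column conditions of Lemma~\ref{lem matrix matching}, is exactly what confines the new information to the high range and makes the argument go through cleanly.
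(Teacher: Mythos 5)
Your proposal is correct and takes essentially the same route as the paper's proof: apply Lemma~\ref{lem matrix matching} to $M$ viewed as the $(p\times q)$ matrix, add the resulting matching to form $G'$, and separate same-colored (same-column) pairs of $M$ via the distinct high-range colors of their matching partners, all other pairs being handled by conditions (ii)--(iv). If anything, your explicit treatment of properness and of the case where one vertex of the pair is left unsaturated (at most one per column) is more careful than the paper's terse assertion that same-colored pairs ``must have neighbors from separate columns.''
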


\begin{proof}
First of all build a matrix whose $ij^{th}$ entry corresponds to the vertex $m_{ij}$. 
After that, build a complete graph whose vertices are entries of this matrix. Now using Lemma~\ref{lem matrix matching}, we can find a matching of this complete graph that satisfies the three conditions mentioned in the statement of Lemma~\ref{lem matrix matching}. We construct $G'$ by including exactly the edges corresponding to the edges of the matching, between the vertices of $M$. We want to show that after adding these edges and obtaining $G'$, indeed $\phi$ is a neighbor-locating $(s'+q)$-coloring of $G'$. 

Notice that by the definition of $\phi$, $(s'+q)$ colors are used. So it is enough to show that the vertices of $G'$ are neighbor-distinguished with respect to $\phi$. To be precise, it is enough to show that two vertices $x,y$ from $M$ are neighbor-distinguished with respect to $\phi$ in $G'$ because of condition~$(ii)$ of the statement. 
If for some $x,y \in M$ we have $\phi(x) = \phi(y)$, then
that means $x,y$ are from the same column of $M$. Therefore,
according to the conditions of the matching, 
$x,y$ must have neighbors from separate columns of $M$, that is, they have neighbors of different colors. 
This is enough to make $x,y$ neighbor-distinguished. 
\end{proof}

Let us recall a result from~\cite{alcon2019neighbor,BA2014} which we shall use in the construction, indeed, $G_1$ will be defined as a path (see point (iii) of the construction below).

\begin{theorem}[\cite{alcon2019neighbor,BA2014}]\label{path}
    Let $k\ge 4$ be an integer and $P_n$ be a path on $n$ vertices. If $\frac{(k-1)^2(k-2)}{2}<n\le \frac{k^2(k-1)}{2}$, then $\chi_{NL}(P_n)=k$.
\end{theorem}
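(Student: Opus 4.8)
The plan is to establish the two inequalities $\chi_{NL}(P_n)\ge k$ and $\chi_{NL}(P_n)\le k$ separately, both flowing from a single combinatorial observation: in any neighbor-locating coloring $f$, a vertex $v$ is completely described by the pair $(f(v),N_f(v))$, and all these pairs must be pairwise distinct. Since every vertex of a path has degree at most $2$, the set $N_f(v)$ has size $1$ or $2$, so I would first count the feasible pairs available with a palette of $m$ colors. For a fixed color $c$, the color-neighborhood is either a singleton (one of $m-1$ choices) or a $2$-element subset avoiding $c$ (one of $\binom{m-1}{2}$ choices), giving $(m-1)+\binom{m-1}{2}=\frac{m(m-1)}{2}$ possibilities per color, hence $\frac{m^2(m-1)}{2}$ feasible pairs in total.

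For the lower bound, I would argue that since the pairs $(f(v),N_f(v))$ are pairwise distinct, any neighbor-locating $m$-coloring of $P_n$ forces $n\le \frac{m^2(m-1)}{2}$. Applying this with $m=k-1$, the hypothesis $n>\frac{(k-1)^2(k-2)}{2}$ rules out a neighbor-locating $(k-1)$-coloring, so $\chi_{NL}(P_n)\ge k$.

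For the upper bound, the task is constructive: exhibit a neighbor-locating $k$-coloring of $P_n$ for every $n$ in the range, the extreme case being $n=\frac{k^2(k-1)}{2}$, where every feasible pair must be used exactly once. I would read a proper coloring $c_1c_2\cdots c_n$ of the path as a walk on the complete graph $K_k$ of colors (consecutive colors differ), so that the signature of an internal vertex $v_i$ is the \emph{cherry} formed by the two consecutive edges $\{c_{i-1},c_i\}$ and $\{c_i,c_{i+1}\}$, a backtrack $c_{i-1}=c_{i+1}$ producing a singleton signature. The goal of realizing each signature exactly once then translates into finding a single trail traversing every cherry exactly once, which I would model as an Eulerian trail in an auxiliary multigraph whose edges encode the cherries; the relevant degree/parity conditions would be verified by a direct count. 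The coloring of $P_n$ is read off from this trail, reserving two singleton signatures for the two degree-$1$ endpoints.

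The main obstacle I anticipate is two-fold and lies entirely in the upper-bound construction. First, the endpoints have degree $1$ and realize only singleton signatures, so the trail must be arranged so that the two end-signatures differ from all signatures used internally; this constrains the endpoints of the Eulerian trail and requires careful parity bookkeeping. Second, and more delicate, the theorem asserts the exact value $k$ for \emph{every} $n$ in the half-open interval, not just the extremal length. I would therefore need a valid coloring for each intermediate $n$, either by showing that a suitable prefix of the extremal construction remains neighbor-locating (noting that truncation turns the last internal vertex into an endpoint, changing its signature from a pair to a singleton, which must be shown not to collide with any earlier signature) or by giving a length-parametrized construction directly. Handling this range uniformly, together with the parity conditions for the Eulerian trail, is where the technical work concentrates.
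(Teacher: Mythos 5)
You should first note a mismatch with the framing of the task: the paper does not prove this statement at all. It is imported verbatim from the literature (the citations \cite{alcon2019neighbor,BA2014} in the theorem header), and the paper only \emph{uses} it, e.g.\ to certify $\chi_{NL}(G_1)=s$ for the base path in Section~\ref{tightness}. So there is no in-paper proof to compare against, and your attempt must be judged on its own. On that basis: your lower bound is complete and correct. The count of feasible signatures, $(m-1)+\binom{m-1}{2}=\frac{m(m-1)}{2}$ per color and $\frac{m^2(m-1)}{2}$ in total, is exactly right for maximum degree $2$ (properness guarantees $f(v)\notin N_f(v)$), and applying it with $m=k-1$ gives $\chi_{NL}(P_n)\ge k$ on the stated range. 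Your upper-bound model is also sound as far as it goes: taking the auxiliary multigraph $T$ with vertex set the edges of $K_k$, one edge per cherry $(b,\{a,c\})$ and two loops per vertex $\{a,b\}$ (the singletons $(a,\{b\})$ and $(b,\{a\})$), one checks $|E(T)|=k\binom{k-1}{2}+k(k-1)=\frac{k^2(k-1)}{2}$, every degree equals $2(k-2)+4=2k$ (even), and $T$ is connected, so an Eulerian circuit exists; setting aside the two loops at the circuit's base vertex as the two endpoint signatures yields a neighbor-locating $k$-coloring of $P_n$ with $n=\left(\frac{k^2(k-1)}{2}-2\right)+2=\frac{k^2(k-1)}{2}$, exactly the extremal length.

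The genuine gap is the one you flag but do not close: the theorem asserts $\chi_{NL}(P_n)=k$ for \emph{every} $n$ with $\frac{(k-1)^2(k-2)}{2}<n\le\frac{k^2(k-1)}{2}$, and your proposal delivers a construction only at the extremal length. Naive truncation of the extremal coloring can genuinely fail: cutting after $v_i$ turns its signature into the singleton $(c_i,\{c_{i-1}\})$, which is precisely a loop of $T$ that the prefix may already have traversed, and there is no monotonicity to fall back on -- indeed this very paper (Theorem~\ref{theorem_subgh_gh_gap}) shows $\chi_{NL}$ of an induced subgraph can exceed that of the graph, so ``a subpath of an NL-colorable path is NL-colorable'' is not a principle you may invoke. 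A repair is available within your framework -- since you only need each signature used \emph{at most} once for non-extremal $n$, it suffices to build, for each target length, a trail in $T$ of $n-2$ edges whose two terminal loops are unused; the freedom to schedule loops at any visit of an Eulerian circuit to a vertex (each vertex is visited $k$ times) makes this plausible -- but this is exactly where the technical work lies, and as written it is asserted rather than proved. Relatedly, your sketch never identifies where the hypothesis $k\ge 4$ enters (your parity and connectivity checks already hold for $k=3$), which is a sign that the intermediate-length analysis, where the small-$k$ exceptions of the cited result live, is the part still missing.
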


\medskip

 \noindent \textbf{The construction of $G_{i+1}$ from $G_i$:} Now we are ready to present our iterative construction. However, given the involved nature of it, we need some specific nomenclatures to describe it. For convenience, we will list down some points to describe the whole construction.  

\begin{figure}[t]
    \centering
	\begin{tikzpicture}[scale=.5, rotate=270]
	
	\foreach \x in {-12,...,11} \foreach \y in {0}{
				\node[circle, draw=black, scale=.3, fill=black] (\x) at (\x,\y){};}

	\foreach \x/\y in {-12/-11,-11/-10,-10/-9,-9/-8,-8/-7,-7/-6,-6/-5,-5/-4,-4/-3,-3/-2,-2/-1,-1/0,0/1,1/2,2/3,3/4,4/5,5/6,6/7,7/8,8/9,9/10,10/11} 
	\draw[-,draw=black, opacity=.5] (\x) -- (\y);

	\foreach \x in {-11,-10,-7,-6,-3,-2,1,2,5,9} \foreach \y in {2}{
				\node[circle, draw=black, scale=.3, fill=black] (\x) at (\x,\y){};}

	\foreach \x in {-11,9} \foreach \y in {2,6,10,14}{
				\node[circle, draw=red, scale=.3, fill=red] (\x) at (\x,\y){};}

	\foreach \x in {6,10} \foreach \y in {2,6,10,14}{
				\node[circle, draw=blue, scale=.3, fill=blue] (\x) at (\x,\y){};}

	\foreach \x/\y in {-11/-12,-11/-10,-10/-11,-10/-9,-7/-8,-7/-6,-6/-7,-6/-5,-3/-4,-3/-2,-2/-3,-2/-1,1/0,1/2,2/1,2/3,5/4,5/6,6/5,6/7,9/8,9/10,10/9,10/11}
	\draw[-,draw=black, opacity=.5] (\x,2) -- (\y,0);

	\foreach \x in {-12,...,11} \foreach \y in {4}{
				\node[circle, draw=black, scale=.3, fill=black] (\x) at (\x,\y){};}

	\foreach \x/\y in {-12/-11,-11/-10,-10/-9,-9/-8,-8/-7,-7/-6,-6/-5,-5/-4,-4/-3,-3/-2,-2/-1,-1/0,0/1,1/2,2/3,3/4,4/5,5/6,6/7,7/8,8/9,9/10,10/11} 
	\draw[-,draw=black, opacity=.5] (\x) -- (\y);

	\foreach \x in {-10,-7,-6,-3,-2,1,2,5} \foreach \y in {6}{
				\node[circle, draw=black, scale=.3, fill=black] (\x) at (\x,\y){};}

	\foreach \x/\y in {-11/-12,-11/-10,-10/-11,-10/-9,-7/-8,-7/-6,-6/-7,-6/-5,-3/-4,-3/-2,-2/-3,-2/-1,1/0,1/2,2/1,2/3,5/4,5/6,6/5,6/7,9/8,9/10,10/9,10/11}
	\draw[-,draw=black, opacity=.5] (\x,6) -- (\y,4);

	\foreach \x in {-12,...,11} \foreach \y in {8}{
				\node[circle, draw=black, scale=.3, fill=black] (\x) at (\x,\y){};}

	\foreach \x/\y in {-12/-11,-11/-10,-10/-9,-9/-8,-8/-7,-7/-6,-6/-5,-5/-4,-4/-3,-3/-2,-2/-1,-1/0,0/1,1/2,2/3,3/4,4/5,5/6,6/7,7/8,8/9,9/10,10/11} 
	\draw[-,draw=black, opacity=.5] (\x) -- (\y);

	\foreach \x in {-10,-7,-6,-3,-2,1,2,5} \foreach \y in {10}{
				\node[circle, draw=black, scale=.3, fill=black] (\x) at (\x,\y){};}

	\foreach \x/\y in {-11/-12,-11/-10,-10/-11,-10/-9,-7/-8,-7/-6,-6/-7,-6/-5,-3/-4,-3/-2,-2/-3,-2/-1,1/0,1/2,2/1,2/3,5/4,5/6,6/5,6/7,9/8,9/10,10/9,10/11}
	\draw[-,draw=black, opacity=.5] (\x,10) -- (\y,8);

	\foreach \x in {-12,...,11} \foreach \y in {12}{
				\node[circle, draw=black, scale=.3, fill=black] (\x) at (\x,\y){};}

	\foreach \x/\y in {-12/-11,-11/-10,-10/-9,-9/-8,-8/-7,-7/-6,-6/-5,-5/-4,-4/-3,-3/-2,-2/-1,-1/0,0/1,1/2,2/3,3/4,4/5,5/6,6/7,7/8,8/9,9/10,10/11} 
	\draw[-,draw=black, opacity=.5] (\x) -- (\y);

	\foreach \x in {-10,-7,-6,-3,-2,1,2,5} \foreach \y in {14}{
				\node[circle, draw=black, scale=.3, fill=black] (\x) at (\x,\y){};}

	\foreach \x/\y in {-11/-12,-11/-10,-10/-11,-10/-9,-7/-8,-7/-6,-6/-7,-6/-5,-3/-4,-3/-2,-2/-3,-2/-1,1/0,1/2,2/1,2/3,5/4,5/6,6/5,6/7,9/8,9/10,10/9,10/11}
	\draw[-,draw=black, opacity=.5] (\x,14) -- (\y,12);

	\foreach \x/\y in 
	{-12/-.4, -12/3.6, -12/7.6, -12/11.6, -7/-.4, -7/3.6, -7/7.6, -7/11.6, 
	-5/-.4, -5/3.6, -5/7.6, -5/11.6,
	-2/-.4, -2/3.6, -2/7.6, -2/11.6,
	0/-.4, 0/3.6, 0/7.6, 0/11.6,
	10/-.4, 10/3.6, 10/7.6, 10/11.6}
	\node  at (\x,\y) {\scriptsize $1$};

	\foreach \x/\y in 
	{-10/-.4, -10/3.6, -10/7.6, -10/11.6, -8/-.4, -8/3.6, -8/7.6, -8/11.6, 
	-6/-.4, -6/3.6, -6/7.6, -6/11.6,
	1/-.4, 1/3.6, 1/7.6, 1/11.6,
	6/-.4, 6/3.6, 6/7.6, 6/11.6,
	8/-.4, 8/3.6, 8/7.6, 8/11.6}
	\node  at (\x,\y) {\scriptsize $2$};

	\foreach \x/\y in 
	{-11/-.4, -11/3.6, -11/7.6, -11/11.6, -9/-.4, -9/3.6, -9/7.6, -9/11.6, 
	-4/-.4, -4/3.6, -4/7.6, -4/11.6,
	3/-.4, 3/3.6, 3/7.6, 3/11.6,
	5/-.4, 5/3.6, 5/7.6, 5/11.6,
	11/-.4, 11/3.6, 11/7.6, 11/11.6}
	\node  at (\x,\y) {\scriptsize $3$};

	\foreach \x/\y in 
	{-3/-.4, -3/3.6, -3/7.6, -3/11.6, 
	-1/-.4, -1/3.6, -1/7.6, -1/11.6, 
	2/-.4, 2/3.6, 2/7.6, 2/11.6,
	4/-.4, 4/3.6, 4/7.6, 4/11.6,
	7/-.4, 7/3.6, 7/7.6, 7/11.6,
	9/-.4, 9/3.6, 9/7.6, 9/11.6}
	\node  at (\x,\y) {\scriptsize $4$};

	\draw[-,dashed,draw=red, opacity=.5] (-11,2) -- (9,6);
	\draw[-,dashed,draw=red, opacity=.5] (-11,6) -- (9,10);
	\draw[-,dashed,draw=red, opacity=.5] (-11,10) -- (9,14);
	\draw[-,dashed,draw=red, opacity=.5] (-11,14) -- (9,2);

	\draw[-,dashed,draw=blue, opacity=.8] (6,2) -- (10,6);
	\draw[-,dashed,draw=blue, opacity=.8] (6,6) -- (10,10);
	\draw[-,dashed,draw=blue, opacity=.8] (6,10) -- (10,14);
	\draw[-,dashed,draw=blue, opacity=.8] (6,14) -- (10,2);

	\draw [decorate,
    decoration = {calligraphic brace,mirror,raise=5pt,
        amplitude=5pt},thick] (11,-.1) --  (11,2.1);
    
    \node at (12.4,1) {$G'_2$};

    
	
	\node at (-12.5,2) {\textcolor{darkmagenta}{\textbf{$5$}}};
	\node at (-12.5,6) {\textcolor{darkmagenta}{\textbf{$6$}}};
	\node at (-12.5,10) {\textcolor{darkmagenta}{\textbf{$7$}}};
	\node at (-12.5,14) {\textcolor{darkmagenta}{\textbf{$8$}}};

	\node at (-12.6,-3) {\textcolor{darkmagenta}{Colors of new vertices:}};

	\end{tikzpicture}
	\centering
    \caption{Construction of $G_2$ from $G_1=P_{24}$. Here $\chi_{NL}(P_{24})=4$, the red and blue edges are the two sets of newly added matchings.}
    \label{fig:Construction of $G_i+1$ from $G_i$}
\end{figure}
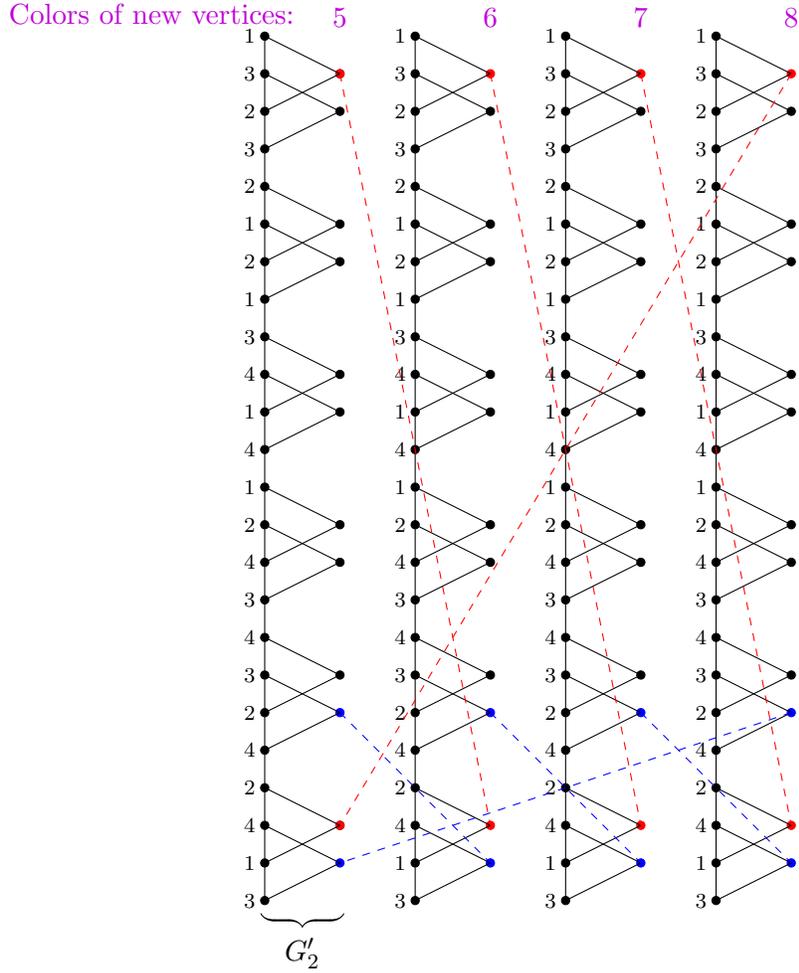

\begin{enumerate}[(i)]
\item  An \textit{$i$-triplet} is a $3$-tuple of the type 
$(G_i, \phi_i, X_i)$ where $G_i$ is a graph, $\phi_i$ is a neighbor-locating $(is)$-coloring of $G_i$, $X_i$ is a set of $(i+1)$-tuples of vertices of $G_i$, each tuple having distinct elements.
Also, $X_i$ disjointly covers the vertices of $G_i$, that is, each vertex of $G_i$ appears exactly once in one of the $(i+1)$-tuples of $X_i$.

\item We will assume a partition $Y_i$ of $X_i$ where 
two elements 
$(x_1, x_2, \cdots, x_{i+1})$ and 
$(x'_1, x'_2, \cdots, x'_{i+1})$ 
of $X_i$ are put in the same cell of the partition if 
$$\{\phi_i(x_j) : j = 1, 2, \cdots, i+1\} = 
\{\phi_i(x'_j) : j = 1, 2, \cdots, i+1\}.$$
That is, the partition is based on the set of colors used on the vertices belonging to the $(i+1)$-tuples. 
Moreover, the cells of the partition are given by
$Y_i = \{X_{i1}, X_{i2}, \cdots, X_{ik_i}\}$ where $k_i$ denotes the number of cells in $Y_i$. In Lemma~\ref{lem_X_ir}, we will show that 
each cell of such partition has less than $s$ vertices. For now, we will accept it as a fact and carry on with the construction.

\item Let us describe the $1$-triplet 
$(G_1, \phi_1, X_1)$ explicitly. Here $G_1$ is the path 
    $P_t = v_1v_2\cdots v_t$ on $t$ vertices where 
$t = 4\left\lfloor \frac{s^2(s-1)}{8}\right\rfloor$. As 
$$\frac{(s-1)^2(s-2)}{2} < 4\left\lfloor \frac{s^2(s-1)}{8}\right\rfloor \leq \frac{s^2(s-1)}{2},$$
we must have $\chi_{NL}(P_t) = s$ (by Theorem~\ref{path}).

Let $\phi_1$ be any neighbor-locating $s$-coloring of $G_1$ and 
$$X_1 = \{(v_{i-1}, v_{i+1}) : i\equiv 2,3 \pmod 4 \}.$$ 
Clearly each 2-tuple in $X_1$ has distinct elements and $X_1$ disjointly covers the vertices of $G_1$. Therefore, $(G_1,\phi_1,X_1)$ satisfies (i).

\item  Suppose an $i$-triplet $(G_i, \phi_i, X_i)$  is given. We will (partially) describe a way to construct an $(i+1)$-triplet from it. 
To do so, 
first we will  construct  an intermediate graph $G'_{i+1}$ as follows: 
for each $(i+1)$-tuple $(x_1, x_2, \cdots, x_{i+1}) \in X_i$  we will add a \textit{new vertex} $x_{i+2}$ adjacent to each vertex from the $(i+1)$-tuple. 
Moreover, $(x_1, x_2, \cdots, x_{i+1}, x_{i+2})$ is designated as an $(i+2)$-tuple in $G'_{i+1}$. 
 After that, we will take $s$ copies of $G'_{i+1}$ and call this so-obtained graph $G''_{i+1}$. Furthermore, we will extend $\phi_i$ to a function $\phi_{i+1}$ by assigning the color $(is+j)$ to the new vertices from the $j^{th}$ copy of  $G'_{i+1}$. The copies of the $(i+2)$-tuples are the $(i+2)$-tuples of $G''_{i+1}$. 
Note that the set $X_{i+1}$ of all $(i+2)$-tuples disjointly cover the vertices of $G''_{i+1}$.

\item Recall the notion of partition from (ii). As  $\phi_i$ and $X_i$ will remain unchanged when we add some edges to construct $G_{i+1}$ (we know in hindsight) from $G''_{i+1}$, we can already speak about the partition 
$$Y_{i+1} = \{X_{(i+1)1}, X_{(i+1)2}, \cdots, X_{(i+1)k_{i+1}}\}$$
of $X_{i+1}$.  
Recall that the last  vertex of an $(i+2)$-tuple is a new vertex of
$G''_{i+1}$. Observe that two new vertices of $G''_{i+1}$ have the same color if and only if they belong to the same copy of $G'_{i+1}$. Thus, the vertices of the $(i+2)$-tuples of a particular cell $X_{(i+1)r}$ must belong to the same copy of $G'_{i+1}$ in $G''_{i+1}$. Thus, if $|X_{1r}| < s$ for all 
$r \in \{1,2,\cdots, k_1\}$, then $|X_{ir}| < s$ for all $i$ and for all $r$. 
In Lemma~\ref{lem_X_ir}, we will show that 
each cell of $Y_1$ has less than $s$ vertices. For now, we will accept it as a fact and carry on with the construction.

\item Here we are going to construct a matrix $M$ using some of the new vertices. Let us assume that $X_{(i+1)r}$ is a cell of the partition $Y_{i+1}$ whose vertices belong to the 
$1^{st}$ copy of $G'_{i+1}$ in $G''_{i+1}$. 
Let us take the last entries (new vertices) of the  $(i+2)$-tuples belonging to $X_{(i+1)r}$ and place them in a column (without repetition). This will be the first column of our matrix $M$. The $l^{th}$ column of the matrix can be obtained by replacing the 
entries of the $1^{st}$ column by their copies from the $l^{th}$ copy of $G'_{i+1}$ in $G''_{i+1}$. 
This matrix $M$ is a $(p \times q)$ matrix where 
$p = |X_{ir}|$ and $q = s$. We have $p < q$ assuming Lemma~\ref{lem_X_ir}.

\item Let us delete all the new vertices from $G''_{i+1}$ except for the ones in $M$. This graph has the exact same properties of the graph $G$ from Corollary~\ref{cor_spanning_supergh}, where $M$ plays the role of the independent set. Thus, it is possible to add a matching and extend the coloring (like in Corollary~\ref{cor_spanning_supergh}). We do that for each 
cell $X_{(i+1)r}$ of the partition $Y_{i+1}$ whose vertices belong to the $1^{st}$ copy of $G'_{i+1}$ in $G''_{i+1}$. 
 After adding all such matchings, the graph we obtain is $G_{i+1}$. See Figures~\ref{fig:Construction of $G_i+1$ from $G_i$} and~\ref{Construction of $G''_{i+1}$ from $G_i$} for reference.
\end{enumerate}

\begin{figure}
\centering
\begin{tikzpicture}[scale=0.61]
\draw \boundellipse{0,0}{1}{4};
\draw \boundellipse{5,0}{1}{4};
\draw \boundellipse{15,0}{1}{4};

\draw \boundellipse{0,2.5}{.4}{.8};
\draw \boundellipse{0,.5}{.4}{.8};
\draw[fill=black] (0,-.7) circle (1pt);
\draw[fill=black] (0,-1) circle (1pt);
\draw[fill=black] (0,-1.3) circle (1pt);
\draw \boundellipse{0,-2.5}{.4}{.8};
\draw[fill=black] (2,2.5) circle (2pt);
\draw[fill=black] (2,.5) circle (2pt);
\draw[fill=black] (2,-2.5) circle (2pt);
\draw (0,3.3) -- (2,2.5);
\draw (0,1.7) -- (2,2.5);
\draw (0,1.3) -- (2,.5);
\draw (0,-.3) -- (2,.5);
\draw (0,-1.7) -- (2,-2.5);
\draw (0,-3.3) -- (2,-2.5);
\node at (0,-4.5) {\large $G_i$};

\draw [decorate,
    decoration = {calligraphic brace,mirror,raise=5pt,
        amplitude=5pt},thick] (-.5,-4.7) --  (2.3,-4.7);
\node at (.8,-5.7) {\large $G'_{i+1}$};

\draw \boundellipse{5,2.5}{.4}{.8};
\draw \boundellipse{5,.5}{.4}{.8};
\draw[fill=black] (5,-.7) circle (1pt);
\draw[fill=black] (5,-1) circle (1pt);
\draw[fill=black] (5,-1.3) circle (1pt);
\draw \boundellipse{5,-2.5}{.4}{.8};
\draw[fill=black] (7,2.5) circle (2pt);
\draw[fill=black] (7,.5) circle (2pt);
\draw[fill=black] (7,-2.5) circle (2pt);
\draw (5,3.3) -- (7,2.5);
\draw (5,1.7) -- (7,2.5);
\draw (5,1.3) -- (7,.5);
\draw (5,-.3) -- (7,.5);
\draw (5,-1.7) -- (7,-2.5);
\draw (5,-3.3) -- (7,-2.5);

\draw[fill=black] (10,0) circle (1pt);
\draw[fill=black] (11,0) circle (1pt);
\draw[fill=black] (12,0) circle (1pt);

\draw \boundellipse{15,2.5}{.4}{.8};
\draw \boundellipse{15,.5}{.4}{.8};
\draw[fill=black] (15,-.7) circle (1pt);
\draw[fill=black] (15,-1) circle (1pt);
\draw[fill=black] (15,-1.3) circle (1pt);
\draw \boundellipse{15,-2.5}{.4}{.8};
\draw[fill=black] (17,2.5) circle (2pt);
\draw[fill=black] (17,.5) circle (2pt);
\draw[fill=black] (17,-2.5) circle (2pt);
\draw (15,3.3) -- (17,2.5);
\draw (15,1.7) -- (17,2.5);
\draw (15,1.3) -- (17,.5);
\draw (15,-.3) -- (17,.5);
\draw (15,-1.7) -- (17,-2.5);
\draw (15,-3.3) -- (17,-2.5);

\node at (-2,4.5) {\textcolor{darkmagenta}{Colors of new vertices:}};
\node at (2,4.5) {\textcolor{darkmagenta}{is+1}};
\node at (7,4.5) {\textcolor{darkmagenta}{is+2}};
\node at (17,4.5) {\textcolor{darkmagenta}{(i+1)s}};

\draw [decorate,
    decoration = {calligraphic brace,raise=8pt,
        amplitude=8pt},thick] (-.5,5) --  (17,5);
\node at (8.25,6.1) {\large $s$ copies of $G'_{i+1}$};

\end{tikzpicture}

\caption{Construction of $G''_{i+1}$ from $G_i$.}
\label{Construction of $G''_{i+1}$ from $G_i$}
\end{figure}
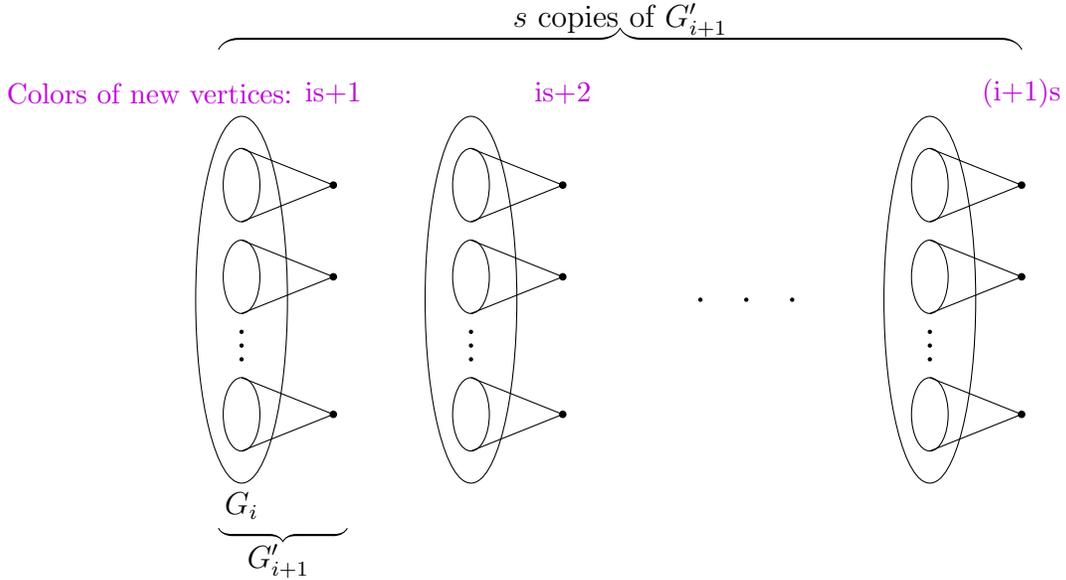

\medskip

\begin{lemma}~\label{lem_X_ir}
We have $|X_{1r}| < s$, where $X_{1r}$ is any cell of 
the partition $Y_1$. 
\end{lemma}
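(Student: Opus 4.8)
The plan is to exploit the fact that each pair in $X_1$ records exactly the colors appearing in the neighborhood of a single internal vertex of the path $G_1=P_t$, and then to use the neighbor-locating property of $\phi_1$ to bound how many such pairs can share one color-set. So the statement $|X_{1r}|<s$ will follow from a short counting argument rather than from the specific value of $t$ or the $\bmod\ 4$ structure.

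First I would unwind the definitions. Since $X_1=\{(v_{i-1},v_{i+1}) : i\equiv 2,3 \pmod 4\}$, every element of $X_1$ is the ordered pair of the two neighbors of some \emph{internal} vertex $v_i$ of the path (internal because $i$ ranges in $\{2,\dots,t-1\}$, so both $v_{i-1}$ and $v_{i+1}$ exist). Consequently the color-set $\{\phi_1(v_{i-1}),\phi_1(v_{i+1})\}$ attached to this pair is precisely the color-neighborhood $N_{\phi_1}(v_i)$. Moreover the middle index $i$ is recovered from the pair, so the assignment $(v_{i-1},v_{i+1})\mapsto v_i$ is a bijection from the pairs of $X_1$ to a set of pairwise distinct internal vertices; in particular $|X_{1r}|$ equals the number of distinct central vertices $v_i$ whose pair lies in the cell $X_{1r}$.

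Next I would run the counting step. By the definition of the partition $Y_1$, two pairs lie in the same cell $X_{1r}$ exactly when their central vertices have the same color-neighborhood, say $C$. Taking the contrapositive of the neighbor-locating condition, any two vertices with equal color-neighborhoods must receive distinct colors; hence the central vertices of the pairs in $X_{1r}$ carry pairwise distinct colors under $\phi_1$. On the other hand, since $\phi_1$ is proper and each such $v_i$ is adjacent to vertices realizing every color of $C$, the color $\phi_1(v_i)$ avoids $C$. Thus the colors used on these central vertices are distinct elements of $\{1,\dots,s\}\setminus C$, a set of size at most $s-1$ (and of size $s-2$ when $|C|=2$). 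Therefore $|X_{1r}|\le s-1<s$, as required.

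I expect the only delicate points to be bookkeeping rather than conceptual. One must confirm that every central vertex $v_i$ arising from $X_1$ is genuinely internal, so that $N_{\phi_1}(v_i)$ really is the color-set of the pair (this is where $i\in\{2,\dots,t-1\}$ is used), and one must handle uniformly the possibility that $v_{i-1}$ and $v_{i+1}$ share a color, making $C$ a singleton; in that boundary case the bound degrades to $s-1$, which is still strictly less than $s$ and hence still delivers $p=|X_{1r}|<q=s$ as needed for the matrix $M$ in point~(vi) and the application of Corollary~\ref{cor_spanning_supergh}.
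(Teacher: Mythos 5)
Your proof is correct and takes essentially the same approach as the paper's: identify each pair of $X_1$ with the color-neighborhood of its central internal vertex of the path, note that the neighbor-locating property forces central vertices sharing a color-neighborhood $C$ to receive pairwise distinct colors, and use properness to confine those colors to $\{1,\dots,s\}\setminus C$, giving $|X_{1r}|\le s-1<s$. Your version simply spells out the bookkeeping (internality of the central vertices and the case $|C|=1$) that the paper's one-line proof leaves implicit.
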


\begin{proof}
Any vertex (other than the end vertices) in $G_1$ has two color neighbors say $i$ and $j$ ($i$ is possibly equal to $j$). Having fixed the two color neighbors, this vertex will have at most $s-1$ choices of colors. Thus $|X_{1r}| < s$. 
\end{proof}

\begin{lemma}\label{lem phii nl-col}
The function $\phi_{i+1}$ is a neighbor-locating $(i+1)s$-coloring of $G_{i+1}$. 
\end{lemma}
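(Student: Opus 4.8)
The plan is to argue by induction on $i$, proving the slightly more convenient statement that $\phi_i$ is a neighbor-locating $(is)$-coloring of $G_i$ for every $i\ge 1$; the present lemma is exactly its inductive step. The base case $i=1$ is immediate: by point~(iii) of the construction $G_1=P_t$ with $t=4\lfloor s^2(s-1)/8\rfloor$ satisfies $\chi_{NL}(P_t)=s$ via Theorem~\ref{path}, and $\phi_1$ is chosen to be such a neighbor-locating $s$-coloring. So I would assume that $\phi_i$ is a neighbor-locating $(is)$-coloring of $G_i$ and establish the two required properties of $\phi_{i+1}$ on $G_{i+1}$.

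First I would verify that $\phi_{i+1}$ is a proper $(i+1)s$-coloring. Every vertex of $G_{i+1}$ is either an \emph{old} vertex (lying in one of the $s$ copies of $G_i$) keeping a $\phi_i$-color in $\{1,\dots,is\}$, or a \emph{new} vertex receiving a color in $\{is+1,\dots,(i+1)s\}$ according to its copy. Properness inside each copy is inherited from $\phi_i$; every old--new edge joins the two disjoint color ranges; and every matching (new--new) edge joins two entries of a matrix $M$ from \emph{different} columns, hence of different colors by condition~(iv) of Corollary~\ref{cor_spanning_supergh}. Thus $\phi_{i+1}$ is proper and uses colors in $\{1,\dots,(i+1)s\}$.

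For the neighbor-locating property I would first record two structural facts. (a) Since $X_i$ partitions $V(G_i)$, an old vertex $u$ in copy $j$ is adjacent to exactly one new vertex, namely the one attached to its $(i+1)$-tuple in copy $j$, so its only color-neighbor exceeding $is$ is $is+j$. (b) As the added edges form a matching, a new vertex has at most one matching partner, so among its color-neighbors the only one possibly exceeding $is$ is that partner's color, whereas its color-neighbors in $\{1,\dots,is\}$ are exactly the colors of the vertices of its tuple. Granting these, every pair $x,y$ with $\phi_{i+1}(x)\neq\phi_{i+1}(y)$ is distinguished for free; this covers all old--new pairs and all pairs of new vertices from different copies. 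For two old vertices $u,v$ of the same color in the same copy $j$, the induction hypothesis gives $N_{\phi_i}(u)\neq N_{\phi_i}(v)$ inside $\{1,\dots,is\}$, and by~(a) both gain the same extra color $is+j$, so $N_{\phi_{i+1}}(u)\neq N_{\phi_{i+1}}(v)$; for $u,v$ of the same color in different copies $j_1\neq j_2$, fact~(a) already gives $is+j_1\in N_{\phi_{i+1}}(u)\setminus N_{\phi_{i+1}}(v)$. For two new vertices in the same copy whose $(i+2)$-tuples lie in different cells of $Y_{i+1}$, their color-neighbors in $\{1,\dots,is\}$ are the two distinct color sets of the $(i+1)$-tuples to which they are attached (recall that each cell sits inside a single copy, so within a copy a cell is determined by the old colors), and so they are distinguished as well.

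The main obstacle, and the only place the matching is genuinely used, is the last case: two distinct new vertices $x,y$ in the same copy whose tuples lie in the \emph{same} cell $X_{(i+1)r}$, i.e.\ two entries of one column of the corresponding matrix $M$. Here I would invoke Lemma~\ref{lem matrix matching} through Corollary~\ref{cor_spanning_supergh}, first noting that $p=|X_{ir}|<s=q$ by Lemma~\ref{lem_X_ir} and point~(v), so that the matching of Lemma~\ref{lem matrix matching} indeed applies. Condition~(iii) saturates all but at most one vertex per column, so at least one of $x,y$ is matched; if the other is unmatched it simply has no color-neighbor exceeding $is$ while the first one does. If both are matched, condition~(ii) forces their partners into different columns, hence of different colors $is+l_1\neq is+l_2$, and by fact~(b) these are precisely the unique color-neighbors of $x$ and $y$ above $is$, so $N_{\phi_{i+1}}(x)\neq N_{\phi_{i+1}}(y)$. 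This closes the induction. I expect the delicate bookkeeping to be exactly this final case: keeping track that each new vertex has a single matching partner and that a tuple's colors never exceed $is$, so that the matching conditions~(ii)--(iii) of Lemma~\ref{lem matrix matching} are precisely what is needed to separate vertices sharing a color.
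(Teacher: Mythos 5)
Your proof is correct and follows essentially the same route as the paper: the paper's own proof is a three-line appeal to Corollary~\ref{cor_spanning_supergh} (whose hypotheses are asserted in point~(vii) of the construction), and your main case is exactly that invocation of Lemma~\ref{lem matrix matching} through the corollary. The only difference is one of explicitness, not of method: you spell out the induction on $i$ and the case analysis (old--old pairs within and across copies, new--new pairs across cells, and same-column pairs separated by the matching) that the paper leaves implicit in the construction and in the statement of the corollary.
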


\begin{proof}
The function $\phi_{i+1}$ is constructed from $\phi_i$, alongside constructing the triplet $G_{i+1}$ from $G_i$. While constructing, we use the same steps from that of Corollary~\ref{cor_spanning_supergh}. 
Thus, the newly colored vertices become neighbor-distinguished in $G_{i+1}$ under $\phi_{i+1}$. 
\end{proof}

The above two lemmas validate the correctness of the iterative construction of $G_i$s. However, it remains to show how $G_i$s help us prove our result. To do so, let us prove certain properties of $G_i$s. 

\begin{lemma}\label{lem max degree of Gi}
The graph $G_i$ has 
maximum degree $(i+1)$. 
\end{lemma}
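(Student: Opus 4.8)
The plan is to prove the statement by induction on $i$, mirroring the three stages of the construction of $G_{i+1}$ from $G_i$ given in points (iv)--(vii) and carefully tracking how each operation changes vertex degrees. For the base case $i=1$, the graph $G_1=P_t$ is a path, so it has maximum degree $2=1+1$; since $t=4\left\lfloor \frac{s^2(s-1)}{8}\right\rfloor$ is large, $P_t$ has an internal vertex realizing degree $2$.

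For the inductive step I would assume that $G_i$ has maximum degree $i+1$ (and, to later get equality, that it contains a vertex of degree exactly $i+1$) and analyze the three operations separately. First, passing from $G_i$ to the intermediate graph $G'_{i+1}$: because $X_i$ disjointly covers $V(G_i)$, every old vertex lies in exactly one $(i+1)$-tuple, and a new vertex is adjacent only to the members of its own tuple, so each old vertex acquires exactly one new neighbor and its degree rises from at most $i+1$ to at most $i+2$; each new vertex is adjacent precisely to the $i+1$ vertices of its tuple, hence has degree $i+1$. Second, forming $G''_{i+1}$ as $s$ disjoint copies of $G'_{i+1}$ leaves all degrees unchanged. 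Third, passing from $G''_{i+1}$ to $G_{i+1}$ consists only of adding matchings (via Corollary~\ref{cor_spanning_supergh}) among the new vertices collected into the matrices $M$: each new vertex belongs to exactly one $(i+2)$-tuple, hence to exactly one cell $X_{(i+1)r}$ and thus to exactly one matrix $M$, and being saturated at most once by a matching it gains at most one further edge, while old vertices are never endpoints of matching edges. Combining the three stages, every new vertex ends with degree at most $(i+1)+1=i+2$ and every old vertex with degree at most $i+2$, so $G_{i+1}$ has maximum degree at most $(i+1)+1$.

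To upgrade the bound to an equality (which the eventual lower-bound construction needs), I would observe that the vertex of degree $i+1$ guaranteed in $G_i$ acquires its new neighbor when $G'_{i+1}$ is formed, thereby reaching degree $i+2$, and this degree is preserved in $G''_{i+1}$ and $G_{i+1}$; this closes the induction. The one delicate point, and hence the main thing to verify, is the incidence bookkeeping in the matching stage: that each new vertex sits in a unique matrix $M$ and is matched at most once, and that the matchings touch no old vertex, so that the three stages contribute respectively at most $+1$, $+0$, and $+1$ to any vertex's degree. Everything else is a routine degree count.
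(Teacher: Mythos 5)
Your proof is correct and takes essentially the same route as the paper: induction on $i$, with the same degree bookkeeping that an old vertex (or its copy) gains at most one neighbor per iteration, while a new vertex is adjacent to exactly $i+1$ old vertices and to at most one new vertex via the matching. Your extra step verifying that degree $i+2$ is actually attained (so the maximum degree is exactly, not just at most, $i+1$ in $G_i$) is a small refinement that the paper's proof leaves implicit.
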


\begin{proof}
We will prove this by induction. 
As we have started with a path, our $G_1$ has maximum degree $2$. This proves the base case. 
Suppose that $G_i$ has maximum degree 
$(i+1)$ for all $i \leq j$. This is our induction hypothesis. 
Observe that, 
in the iteration step for constructing the graph 
$G_{i+1}$ from $G_i$, 
the degree of an old vertex (or its copy) can increase at most by $1$, while a new vertex of $G_{i+1}$ is adjacent to exactly $(i+1)$ old vertices and at most one new vertex. Hence, a new vertex in $G_{i+1}$ can have degree at most $(i+2)$. 
\end{proof}

\noindent
Finally, we are ready to prove Theorem~\ref{graph construction}.

\bigskip

\noindent \textit{\textbf{Proof of Theorem~\ref{graph construction}.}}
We consider the graph $G$ to be $G_{\Delta-1}$ as in our construction. By Lemma \ref{lem max degree of Gi}, the maximum degree of $G$ is $\Delta$. Let $k$ be the neighbor-locating chromatic number of $G$. Observe that $\chi_{NL}(G_1)=s$ and 
recall that $s \geq 4$ due to Theorem~\ref{path}. In each iteration, $s$ new colors are added, hence we have $k = \chi_{NL}(G)\le (\Delta-1)s$. First, let us count the number of vertices in $G$.
Let $n_i$ denote the number of vertices in the graph $G_i$.
By the construction, the base graph $G_1$ has $n_1=t=4\left\lfloor\frac{s^3-s^2}{8} \right\rfloor$ number of vertices. Further, $\frac{n_1}{2}$ new vertices are added to each copy of $G_1$ to obtain the vertices of $G_2$. So, $n_2=s(n_1+\frac{n_1}{2})=\frac{3sn_1}{2}$. Further, $\frac{n_2}{3}$ new vertices are added to each of the $s$ copies of $G_2$ to obtain the vertices of $G_3$. This gives $n_3=s(n_2+\frac{n_2}{3})=\frac{4}{3}sn_2 = \frac{4}{3} \frac{3}{2} s^2 n_1 = \frac{4}{2} s^2 n_1$. Proceeding in this manner, we have in general, $n_i=\frac{(i+1)}{2}s^{i-1}n_1$. Therefore, putting $i=\Delta - 1$ and using the fact that $n_1 \geq \frac{s^3-s^2}{2} -4$ it is easy to see that the number of vertices in $G=G_{\Delta-1}$ is

\begin{align*}
n_{\Delta-1} \geq \frac{\Delta}{4} (s^{\Delta+1} - s^{\Delta} - 8s^{\Delta-2})
 &= \frac{\Delta}{4} s^{\Delta+1} \left(1-\frac{1}{s}-\frac{8}{s^3}\right)\\
& = \frac{\Delta}{4} s^{\Delta+1} \left(1-\frac{(s^2+8)}{s^3}\right)\\
& \geq \frac{5}{32}\Delta s^{\Delta+1} 
\text{ (as $s \geq 4$)}\\
& \ge \frac{5}{32}\Delta \left(\frac{k}{\Delta-1}\right)^{\Delta+1}\\
& = \Omega \left(\Delta\left(\frac{k}{\Delta-1} \right)^{\Delta+1} \right).
\end{align*}

This establishes the proof. \qed

\section{Complexity of Locating coloring and Neighbor-locating coloring for sparse graphs}\label{NP_comp}
In this section, we will show that the locating coloring and the neighbor-locating coloring problems are NP-complete even when restricted to families of sparse graphs. For the sake of precision, let us formally define the $3$-coloring, the locating coloring and the neighbor-locating coloring problems. 

\medskip

\begin{mdframed}

\textsc{$3$-Coloring}

\noindent
\textbf{Instance:} A graph $G$. 

\noindent
\textbf{Question:} Does there exist a proper $3$-coloring of $G$?

\end{mdframed}

\medskip

\begin{mdframed}
\textsc{L-Coloring}

\noindent
\textbf{Instance:} A graph $G$ and a positive integer $k$.

\noindent
\textbf{Question:}
Does there exist a locating $k$-coloring of $G$?
\end{mdframed}

\medskip

\begin{mdframed}
\textsc{NL-Coloring}

\noindent
\textbf{Instance:} A graph $G$ and a positive integer $k$.

\noindent
\textbf{Question:}
Does there exist a neighbor-locating $k$-coloring of $G$?
\end{mdframed}

\bigskip

It is well-known that the \textsc{$3$-Coloring} problem is NP-complete~\cite{karp1972reducibility}. Moreover, the problem remains NP-complete  
even when restricted to the family of planar graphs having maximum degree $4$. 

\begin{theorem}[\cite{garey1974some}]
   The \textsc{$3$-Coloring} problem is NP-complete even for planar graphs of maximum degree $4$.
\end{theorem}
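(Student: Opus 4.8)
The plan is to establish the two standard ingredients of NP-completeness: membership and hardness. Membership of \textsc{$3$-Coloring} in NP is immediate, since a claimed $3$-coloring is a certificate checkable in linear time by scanning every edge. The substance is therefore the NP-hardness, which I would obtain by a polynomial-time reduction from the \emph{unrestricted} \textsc{$3$-Coloring} problem, known to be NP-complete (via Karp's reduction, ultimately from \textsc{$3$-SAT}~\cite{karp1972reducibility}). Given an arbitrary graph $G$, the goal is to produce a planar graph $G'$ of maximum degree at most $4$ such that $G$ is $3$-colorable if and only if $G'$ is. The two constraints, planarity and bounded degree, are imposed by two families of local gadgets applied in turn, and the overall equivalence is argued by composing the local guarantees of the gadgets.

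First I would enforce planarity. Fix any drawing of $G$ in the plane in general position, so that edges meet only in finitely many transversal crossings; this is computable in polynomial time, and the number of crossings is at most quadratic in $|E(G)|$. The key device is a \emph{crossover gadget}: a fixed planar graph with four distinguished terminals $N, S, E, W$ such that, in every proper $3$-coloring, $c(N)=c(S)$ and $c(E)=c(W)$, while conversely every assignment of colors to the terminals respecting these two equalities extends to a proper $3$-coloring of the gadget. Such a gadget transmits each ``color signal'' straight across the gadget without the two signals interacting, so it faithfully simulates a crossing in a planar way. Realizing each edge of $G$ as a \emph{wire}, namely a chain of color-propagating gadgets terminating in one genuine edge that enforces the distinctness of the two endpoints, and splicing a crossover gadget into every former crossing, yields an equivalent planar instance.

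Next I would reduce the maximum degree to $4$. The workhorse is the \emph{equality gadget}: a diamond ($K_4$ minus an edge) on vertices $a, x, y, b$ with the non-edge $ab$. Since $x$ and $y$ are adjacent and each of $a, b$ is adjacent to both, both $a$ and $b$ are forced onto the unique remaining color, whence $c(a)=c(b)$ in every proper $3$-coloring; this same gadget also serves as the propagation unit for the wires above. To tame a vertex $v$ of degree $d>4$ with neighbors $w_1,\dots,w_d$, I would replace $v$ by a chain of copies $v_1,\dots,v_d$ linked consecutively through equality gadgets and attach each $w_i$ to the corresponding $v_i$: the equality gadgets force all copies to share one color, faithfully simulating $v$, while every vertex now has bounded degree. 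Applying this to every high-degree vertex, and verifying that the crossover and equality gadgets are themselves planar and of degree at most $4$ (or degree-reducing them too), produces $G'$ in polynomial time.

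The main obstacle is the crossover gadget: one must exhibit an explicit small planar graph realizing exactly the behavior ``$c(N)=c(S)$ and $c(E)=c(W)$, with all such boundary colorings extendable,'' and verify \emph{both} directions of this characterization by a finite case analysis over the three colors. A secondary difficulty is the bookkeeping: one must check that planarity and the degree bound survive the gluing of gadgets, and that the global equivalence $G \text{ is } 3\text{-colorable} \iff G' \text{ is } 3\text{-colorable}$ really does follow from the local properties of each gadget. Once the crossover and equality gadgets are in hand and certified correct, the reduction and its polynomial running time follow routinely.
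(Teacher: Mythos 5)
You are reconstructing the classical proof of Garey, Johnson and Stockmeyer -- the paper itself offers no proof, only the citation -- and your overall architecture (membership is trivial; reduce from unrestricted \textsc{$3$-Coloring}; splice crossover gadgets into a drawing to enforce planarity; then split high-degree vertices locally) is exactly the route taken in the cited source. However, there are two genuine gaps. The first is that the crossover gadget, which you yourself identify as ``the main obstacle,'' is never exhibited. Its existence is precisely the nontrivial content of the planarity half of the theorem: one must display a concrete planar graph with terminals $N,S,E,W$, check by case analysis that every proper $3$-coloring satisfies $c(N)=c(S)$ and $c(E)=c(W)$, and check that every terminal coloring obeying these equalities extends. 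Moreover, for the degree bound to survive, the terminals of that gadget must have degree at most $3$ inside it, a constraint you do not impose. As written, the planarity step is an outline that presupposes the theorem's hardest ingredient.

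The second gap is concrete and would make the construction fail: your degree-reduction chain violates the degree-$4$ bound. In the diamond ($K_4$ minus the edge $ab$), the terminals $a$ and $b$ each have degree $2$ \emph{inside} the gadget. In your chain $v_1,\dots,v_d$, every interior copy $v_i$ is a terminal of two diamonds (one linking it to $v_{i-1}$, one to $v_{i+1}$), so it has degree $4$ within the chain before the pendant edge to $w_i$ is added -- giving degree $5$. This cannot be repaired by lengthening the chain or inserting intermediate copies, since every non-extremal vertex of a diamond chain lies in two diamonds; a path of diamonds provides only its two end terminals as usable attachment points. The classical fix is a different gadget whose attachment points have degree $3$ inside it: Garey, Johnson and Stockmeyer use a planar strip of triangles in which, because each successive vertex completes a triangle with two already-colored vertices, the coloring of the whole strip is forced by one edge and the colors repeat with period three, so that every third vertex along the strip is forced to the same color while retaining a spare incidence for the external edge. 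With that gadget substituted for your diamond chain (and a verified crossover gadget), the rest of your argument -- the wires, the splicing, and the composition of local equivalences -- goes through as you describe.
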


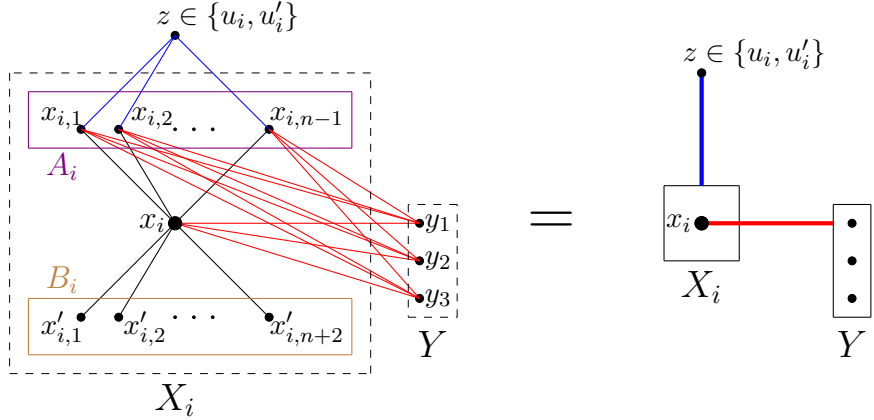
\begin{figure}
	\centering 
	\begin{tikzpicture}[scale=0.5]
		\draw[fill=black] (0,0) circle (3.5pt);
		\node at (-.6,0) {\large $x_i$};
		\draw[fill=black] (-2.5,2.5) circle (3pt);
		\node at (-3,2.8) {$x_{i,1}$};
		\draw[fill=black] (-1.5,2.5) circle (3pt);
		\node at (-.6,2.8) {$x_{i,2}$};
		\draw[fill=black] (0,2.5) circle (1pt);
		\draw[fill=black] (.5,2.5) circle (1pt);
		\draw[fill=black] (1,2.5) circle (1pt);
		\draw[fill=black] (2.5,2.5) circle (3pt);
		\node at (3.5,2.8) {$x_{i,n-1}$};
		
		\draw (0,0) -- (-2.5,2.5);
		\draw (0,0) -- (-1.5,2.5);
		\draw (0,0) -- (2.5,2.5);

		\draw[fill=black] (-2.5,-2.5) circle (3pt);
		\node at (-3,-2.8) {$x'_{i,1}$};
		\draw[fill=black] (-1.5,-2.5) circle (3pt);
		\node at (-.8,-2.8) {$x'_{i,2}$};
		\draw[fill=black] (0,-2.5) circle (1pt);
		\draw[fill=black] (.5,-2.5) circle (1pt);
		\draw[fill=black] (1,-2.5) circle (1pt);
		\draw[fill=black] (2.5,-2.5) circle (3pt);
		\node at (3.5,-2.8) {$x'_{i,n+2}$};
		
		\draw (0,0) -- (-2.5,-2.5);
		\draw (0,0) -- (-1.5,-2.5);
		\draw (0,0) -- (2.5,-2.5);

		\draw[fill=black] (0,5) circle (3pt);
		\node at (1.4,5.5) {$z\in\{u_i,u'_i\}$};
		\draw[blue] (0,5) -- (-2.5,2.5);
		\draw[blue] (0,5) -- (-1.5,2.5);
		\draw[blue] (0,5) -- (2.5,2.5);

		\draw[fill=black] (6.5,0) circle (3pt);
		\node at (7,0) {$y_1$};
		\draw[fill=black] (6.5,-1) circle (3pt);
		\node at (7,-1) {$y_2$};
		\draw[fill=black] (6.5,-2) circle (3pt);
		\node at (7,-2) {$y_3$};
		
		\draw[red] (0,0) -- (6.5,0);
		\draw[red] (0,0) -- (6.5,-1);
		\draw[red] (0,0) -- (6.5,-2);

		\draw[red] (6.5,0) -- (-2.5,2.5);
		\draw[red] (6.5,0) -- (-1.5,2.5);
		\draw[red] (6.5,0) -- (2.5,2.5);
		\draw[red] (6.5,-1) -- (-2.5,2.5);
		\draw[red] (6.5,-1) -- (-1.5,2.5);
		\draw[red] (6.5,-1) -- (2.5,2.5);
		\draw[red] (6.5,-2) -- (-2.5,2.5);
		\draw[red] (6.5,-2) -- (-1.5,2.5);
		\draw[red] (6.5,-2) -- (2.5,2.5);
		
		\draw[fill=black] (0,0) circle (5pt);
		
		\draw[dashed,line width=0.1mm] (-4.4,-4) -- (5.2,-4);
		\draw[dashed,line width=0.1mm] (5.2,-4) -- (5.2,4);
		\draw[dashed,line width=0.1mm] (5.2,4) -- (-4.4,4);
		\draw[dashed,line width=0.1mm] (-4.4,4) -- (-4.4,-4);
		\node at (0,-4.7) {\Large $X_i$};

		\draw[dashed,line width=0.1mm] (6.2,-2.5) -- (7.5,-2.5);
		\draw[dashed,line width=0.1mm] (7.5,-2.5) -- (7.5,.5);
		\draw[dashed,line width=0.1mm] (7.5,.5) -- (6.2,.5);
		\draw[dashed,line width=0.1mm] (6.2,.5) -- (6.2,-2.5);
		\node at (6.85,-3.2) {\Large $Y$};

		\draw[brown] (-3.9,-3.5) -- (4.7,-3.5);
		\draw[brown] (4.7,-3.5) -- (4.7,-2);
		\draw[brown] (4.7,-2) -- (-3.9,-2);
		\draw[brown] (-3.9,-2) -- (-3.9,-3.5);
		\node at (-3,-1.5) {\textcolor{brown}{\large $B_i$}};

		\draw[violet] (-3.9,3.5) -- (4.7,3.5);
		\draw[violet] (4.7,3.5) -- (4.7,2);
		\draw[violet] (4.7,2) -- (-3.9,2);
		\draw[violet] (-3.9,2) -- (-3.9,3.5);
		\node at (-3,1.5) {\textcolor{violet}{\large $A_i$}};

		\node at (10,0) {\Huge $=$};

		\draw[fill=black] (14,0) circle (5pt);
		\node at (13.4,0) {$x_i$};
		\draw (13,-1) -- (15,-1);
		\draw (15,-1) -- (15,1);
		\draw (15,1) -- (13,1);
		\draw (13,1) -- (13,-1);
		\draw[fill=black] (14,4) circle (3pt);
		\node at (15.4,4.5) {$z\in\{u_i,u'_i\}$};
		\draw[ultra thick,blue] (14,1) -- (14,4);
		\draw[fill=black] (14,4) circle (3pt);
		\node at (14,-1.7) {\Large $X_i$};

		\draw[fill=black] (18,0) circle (3pt);
		\draw[fill=black] (18,-1) circle (3pt);
		\draw[fill=black] (18,-2) circle (3pt);
		\draw (17.5,.5) -- (18.5,.5);
		\draw (18.5,.5) -- (18.5,-2.5);
		\draw (18.5,-2.5) -- (17.5,-2.5);
		\draw (17.5,-2.5) -- (17.5,.5);
		\node at (18,-3.2) {\Large $Y$};
		\draw[ultra thick,red] (14,0) -- (17.5,0);
		\draw[fill=black] (14,0) circle (5pt);

	\end{tikzpicture}
	
	\caption{The gadget $X_i$ from the construction of $G^*$ from $G$ and its connections.}\label{fig:gadget Xi}
\end{figure}

To prove Theorem~\ref{k_NLC_NPC}, we provide a reduction from the \textsc{$3$-Coloring} problem. The proof involves construction of a graph $G^*$ from a given connected graph $G$ and a few lemmas to analyse its properties.

\medskip

\noindent
\textbf{Construction of  $G^*$:}
Let $G$ be a connected graph on the vertices 
$u_1,u_2, \cdots,u_n$. Take a copy of $G$ and 
call it as $G'$ with the vertices 
$u'_1,u'_2,\cdots, u'_n$. If $u_i$ is adjacent 
to $u_j$, then $u'_i$ is made adjacent to $u_j$ 
and $u_i$ is made adjacent to $u'_j$ for 
$i,j \in\{1,2,\cdots,n\}$.

Next we construct the gadgets 
$X_i$ for all $i \in \{1,2, \cdots, n\}$. The gadget $X_i$ consists of 
a vertex called $x_i$ and two independent sets
$A_i=\{x_{i,1},x_{i,2},\cdots,x_{i,n-1}\}$ and 
$B_i=\{x'_{i,1},x'_{i,2},\cdots,x'_{i,n+2}\}$. 
Moreover, $x_i$ is adjacent to 
all the vertices of $A_i$ and $B_i$. 
When we say that the gadget $X_i$ is \textit{attached} to a vertex $z$, we mean that the vertex $z$ is made adjacent to all the vertices in $A_i$. 
After that for each $i \in\{1,2,\cdots,n\}$, the gadget $X_i$ is 
attached to the vertices $u_i$ and $u'_i$. 

Finally, take another independent set  $Y=\{y_1,y_2,y_3\}$ having three vertices. 
For each $i \in\{1,2,\cdots,n\}$, we will 
attach every vertex of $Y$ to the gadgets $X_i$, and make it adjacent to the vertices $x_i$s as well. See Figures~\ref{fig:gadget Xi} and~\ref{fig:G*} for pictorial references.  

\bigskip

\begin{lemma}\label{3col_NLC_forward}
   Let $G$ be a connected graph on $n$ vertices. If $G$ admits a $3$-coloring, then the graph $G^*$ admits a neighbor-locating $(n+3)$-coloring.
\end{lemma}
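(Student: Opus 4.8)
The plan is to take a proper $3$-coloring $c\colon V(G)\to\{1,2,3\}$ of $G$ and to extend it to an explicit $(n+3)$-coloring $\phi$ of $G^*$ that uses the three \emph{base} colors $1,2,3$ together with $n$ \emph{index} colors $4,5,\dots,n+3$, where color $i+3$ is reserved for the gadget $X_i$. Concretely, I would set $\phi(y_j)=j$ for $j\in\{1,2,3\}$, $\phi(x_i)=i+3$, $\phi(u_i)=c(u_i)$, and $\phi(u'_i)=i+3$; on the two independent sets of each gadget I would let $\phi$ restrict to a bijection from $A_i$ onto $\{4,\dots,n+3\}\setminus\{i+3\}$ and from $B_i$ onto $\{1,\dots,n+3\}\setminus\{i+3\}$. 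The point is that $|A_i|=n-1$ and $|B_i|=n+2$ match exactly the number of admissible colors, so these ``rainbow'' assignments exist. Note that this already forces $u_i$ and $u'_i$ (which are false twins in $G^*$, having identical neighborhoods) to receive the different colors $c(u_i)\le 3$ and $i+3$, which is the only way such a twin pair can ever be neighbor-distinguished.

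Next I would verify that $\phi$ is a proper coloring by a routine edge-by-edge inspection of the edge types of $G^*$: the original edges $u_iu_j$ are fine since $c$ is proper; the copy and cross edges $u'_iu'_j,\,u_iu'_j,\,u'_iu_j$ separate an index color from a base color or two distinct index colors $i+3\ne j+3$; and every gadget edge ($x_i$ to $A_i\cup B_i\cup Y$, the attachments of $u_i,u'_i,Y$ to $A_i$, and $y_j$ to $x_i$) is respected because $A_i$ and $B_i$ explicitly avoid $\phi(x_i)=i+3$, and the base colors of $Y$ and of the $u_i$ are disjoint from the index colors used on $A_i$.

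The heart of the argument is showing that $\phi$ is neighbor-locating, for which I would group the vertices by color class. For a base color $j\in\{1,2,3\}$, the vertices colored $j$ are $y_j$, the $u_i$ with $c(u_i)=j$, and exactly one vertex of each $B_i$; their color-neighborhoods are, respectively, the full set $\{4,\dots,n+3\}$ of index colors (for $y_j$), all index colors \emph{except} $i+3$ together with the base colors of $u_i$'s $G$-neighbors (for $u_i$), and the singleton $\{i+3\}$ (for a vertex of $B_i$, whose only neighbor is $x_i$). For an index color $m+3$, the vertices colored $m+3$ are $x_m$, $u'_m$, and one vertex from each $A_i$ and each $B_i$ with $i\ne m$; here $N_\phi(x_m)$ is everything except $m+3$, $N_\phi(u'_m)$ consists of all index colors except $m+3$ plus $\{c(u_j):u_j\sim u_m\}$, an $A_i$-vertex has $N_\phi=\{1,2,3,i+3\}$, and a $B_i$-vertex has $N_\phi=\{i+3\}$. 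In each class these sets are pairwise distinct: $y_j$ is the unique vertex seeing all index colors; $x_m$ is separated from $u'_m$ because $c(u_m)\in N_\phi(x_m)$ but $c(u_m)\notin N_\phi(u'_m)$ (no neighbor of $u'_m$ carries the color $c(u_m)$, by properness of $c$); and the large color-neighborhoods of $x_m,u'_m,u_i,y_j$ separate them from the small ones of the gadget vertices.

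The step I expect to be the main obstacle is precisely the pair of ``colliding'' families: pendant vertices lying in \emph{different} sets $B_i$, and vertices of \emph{different} sets $A_i$, which unavoidably share colors and hence are not separated by color alone. The resolution is that each such vertex's color-neighborhood records the color $i+3$ of its gadget center $x_i$, and since the $x_i$ all receive pairwise distinct colors, distinct gadgets are always distinguished; this is exactly why the sizes $|A_i|=n-1$ and $|B_i|=n+2$ are chosen so that the $x_i$'s are forced to use $n$ distinct index colors. Once this mechanism is in place, a short case check over the groupings above completes the verification that $\phi$ is a neighbor-locating $(n+3)$-coloring of $G^*$.
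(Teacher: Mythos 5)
Your proposal is correct and follows essentially the same route as the paper's proof: the same coloring (your index colors $i+3$ are the paper's colors $c_i$), the same rainbow assignments onto $A_i$ and $B_i$, and the same color-class-by-class verification, including the key separation of $x_i$ from $u'_i$ via the fact that properness of $c$ keeps $c(u_i)$ out of $N_\phi(u'_i)$, and the distinguishing of gadget vertices across different $X_i$ by the distinct colors of their centers. The only point stated more explicitly in the paper is that connectivity of $G$ guarantees each $u_p$ has a base-color neighbor, which is used when comparing $u_p$'s color-neighborhood with those of $y_j$ and the $B_i$-vertices.
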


\begin{proof}
    Let $G$ be a connected graph on $n$ vertices which admits a $3$-coloring $f$. 
    We want to extend $f$ to a neighbor-locating $(n+3)$-coloring $f^*$ of $G^*$. In this case, we will use 
    $K = \{1,2,3,c_1,c_2,c_3,\cdots,c_n\}$ as the set of $(n+3)$ colors for $f^*$. We are going to define $f^*$ first and then show that it is a neighbor-locating coloring. 
    
    As mentioned before, $f^*$ is an extension of $f$, and hence the colors assigned to 
    the vertices of $G$ under $f$ are retained. 
    In other words, we have 
    $$f^*(u_i) = f(u_i)$$ 
    for all $i \in \{1,2,\cdots, n\}$. 
    Moreover, we assign the color $c_i$ to the vertices $u'_i$ and $x_i$, that is, $$f^*(u'_i)=f^*(x_i)=c_i.$$ 
    To the vertices $y_1, y_2,$ and $y_3$, we assign the colors $1,2,$ and $3$, respectively. That is, 
    $$f^*(y_j) = j$$
    for all $j \in \{1,2,3\}$.
    
    This leaves us with assigning the colors to the vertices of the independent sets $A_i$s and $B_i$s. Notice that for each $i \in \{1,2,\cdots, n\}$, the set $A_i$ has exactly $(n-1)$ vertices, each of them adjacent to exactly the six vertices $x_i, y_1, y_2, y_3, u_i, u'_i$, and hence are pairwise false twins. Furthermore, notice that the vertices $x_i, y_1, y_2, y_3$ have received 
    four distinct colors under $f^*$, namely, $c_i, 1, 2, 3$, respectively. 
    Thus, to maintain the conditions of a neighbor locating-coloring in hindsight, we will assign distinct colors to the vertices of $A_i$ using the colors from the set 
    $K \setminus \{c_i, 1,2,3\}$. To be precise, the value of $f^*$ for the vertices of $A_i$ is decided to be any valid (fixed) solution of the following set theoretic equation: 
    $$\{f^*(x_{i,1}),f^*(x_{i,2}),\cdots, f^*(x_{i,n-1})\} = \{c_1, c_2, \cdots, c_n\} \setminus \{c_i\}.$$
    Similarly, the set $B_i$ has exactly $(n+2)$ vertices, each of them adjacent to exactly one vertex, namely, $x_i$, and hence are pairwise false twins. To maintain the conditions of a neighbor-locating coloring in hindsight, as $f^*(x_i)=c_i$, we will assign distinct colors to the vertices of $B_i$ using the colors from the set $K \setminus \{c_i\}$. 
    To be precise, the value of $f^*$ for the vertices of $B_i$ is decided to be any valid (fixed) solution of the following set theoretic equation: 
    $$\{f^*(x'_{i,1}),f^*(x'_{i,2}),\cdots, f^*(x'_{i,n+2})\} = K \setminus \{c_i\}.$$

    Next, we will show that $f^*$ is a neighbor-locating $(n+3)$-coloring of $G^*$. To do so, we will show that the set of vertices having the same color are non-adjacent as well as neighbor-distinguished. 
    
    \medskip

   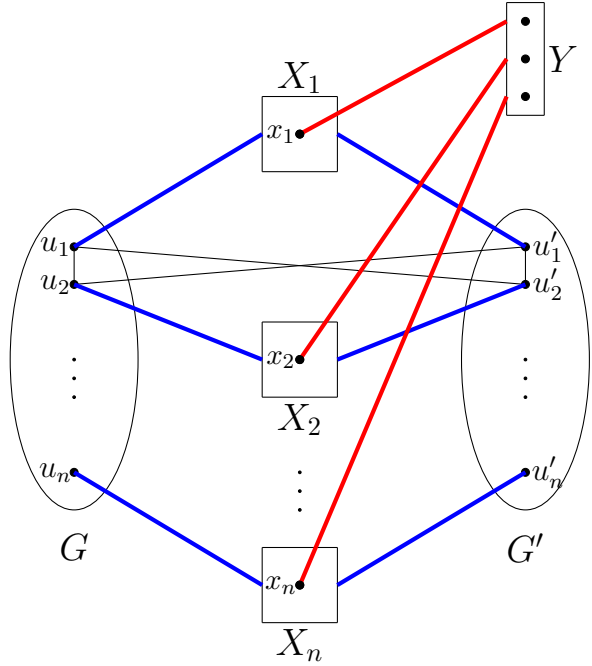
\begin{figure}
    \centering
\begin{tikzpicture}[scale=0.5]
\draw \boundellipse{0,0}{1.7}{4};
\draw \boundellipse{12,0}{1.7}{4};

\node at (0,-5) {\Large $G$};
\node at (12,-5) {\Large $G'$};

\draw[fill=black] (0,3) circle (3pt);
\draw[fill=black] (0,2) circle (3pt);
\node at (-.5,3) {\large $u_1$};
\node at (-.5,2) {\large $u_2$};
\draw[fill=black] (0,-3) circle (3pt);
\node at (-.5,-3) {\large $u_n$};
\draw[fill=black] (0,0) circle (1pt);
\draw[fill=black] (0,-.5) circle (1pt);
\draw[fill=black] (0,-1) circle (1pt);

\draw[fill=black] (12,3) circle (3pt);
\draw[fill=black] (12,2) circle (3pt);
\node at (12.6,3) {\large $u'_1$};
\node at (12.6,2) {\large $u'_2$};
\draw[fill=black] (12,-3) circle (3pt);
\node at (12.6,-3) {\large $u'_n$};
\draw[fill=black] (12,0) circle (1pt);
\draw[fill=black] (12,-.5) circle (1pt);
\draw[fill=black] (12,-1) circle (1pt);

\draw (0,3) -- (0,2);
\draw (0,3) -- (12,2);
\draw (12,3) -- (12,2);
\draw (12,3) -- (0,2);

\draw[fill=black] (6,6) circle (3pt);
\node at (5.5,6) {$x_1$};
\draw (5,5) -- (7,5);
\draw (7,5) -- (7,7);
\draw (7,7) -- (5,7);
\draw (5,7) -- (5,5);
\node at (6,7.5) {\Large $X_1$};

\draw[fill=black] (6,0) circle (3pt);
\node at (5.5,0) {$x_2$};
\draw (5,-1) -- (7,-1);
\draw (7,-1) -- (7,1);
\draw (7,1) -- (5,1);
\draw (5,1) -- (5,-1);
\node at (6,-1.6) {\Large $X_2$};

\draw[fill=black] (6,-4) circle (1pt);
\draw[fill=black] (6,-3) circle (1pt);
\draw[fill=black] (6,-3.5) circle (1pt);

\draw[fill=black] (6,-6) circle (3pt);
\node at (5.5,-6) {$x_n$};
\draw (5,-5) -- (7,-5);
\draw (7,-5) -- (7,-7);
\draw (7,-7) -- (5,-7);
\draw (5,-7) -- (5,-5);
\node at (6,-7.6) {\Large $X_n$};


\draw[ultra thick,blue] (0,3) -- (5,6);
\draw[ultra thick,blue] (12,3) -- (7,6);

\draw[ultra thick,blue] (0,2) -- (5,0);
\draw[ultra thick,blue] (7,0) -- (12,2);

\draw[ultra thick,blue] (0,-3) -- (5,-6);
\draw[ultra thick,blue] (12,-3) -- (7,-6);

\draw[fill=black] (12,7) circle (3pt);
\draw[fill=black] (12,8) circle (3pt);
\draw[fill=black] (12,9) circle (3pt);

\draw (11.5,9.5) -- (12.5,9.5);
\draw (12.5,9.5) -- (12.5,6.5);
\draw (12.5,6.5) -- (11.5,6.5);
\draw (11.5,6.5) -- (11.5,9.5);

\draw[ultra thick,red] (6,6) -- (11.5,9);
\draw[ultra thick,red] (6,0) -- (11.5,8);
\draw[ultra thick,red] (6,-6) -- (11.5,7);

\draw[fill=black] (6,6) circle (3pt);
\draw[fill=black] (6,0) circle (3pt);
\draw[fill=black] (6,-6) circle (3pt);
\node at (13,8) {\Large $Y$};

\end{tikzpicture}
\caption{A schematic diagram for the construction of $G^*$.}    
\label{fig:G*}
\end{figure}

    \begin{itemize} 
    \item First we will deal with the vertices that received the color  
    $j$ for some $j \in \{1,2,3\}$. Notice that the only 
    vertices that received the color $j$ are 
    some vertices of the original graph $G$, the
    vertex $y_j$ from $Y$, and exactly one 
    vertex of $B_i$, say $b_i$, for each 
    $i \in \{1,2,\cdots, n\}$.

    As $f^*$ is an extension of the $3$-coloring $f$ of $G$, any two vertices of $G$ having the same color are non-adjacent. They are non-adjacent to the vertices of $Y$ as well.  
    Moreover, none of the vertices from $B$ are adjacent to any vertex of $G$ or $Y$. Therefore, the vertices of $G^*$ 
    having the color $j$ under  $f^*$ are all independent. 
    
    To observe that they are also neighbor-distinguished, 
    note that a vertex with color $j$ in $G$, 
    say $u_p$, has all $c_i$s, except when $i = p$, as its color-neighbors. Moreover, $u_p$ has at least one color-neighbor from $\{1,2,3\}$ as $G$ is connected. That means, the set of color-neighbors of $u_p$ is 
    $$N_{f^*}(u_p) = \left(\{c_1, c_2, \cdots, c_n\} \setminus \{c_p\}\right) \cup \left(S\setminus \{f(u_p)\}\right)$$
    where $S$ is a non-empty proper subset of $\{1,2,3\}$. 
    The vertex $y_j$ is adjacent to all the vertices of the $A_i$s, and the vertex $x_i$, and thus has all $c_i$s as its color-neighbors. 
    As $y_j$ does not have any other neighbors apart from the ones mentioned above, the set of color neighbors of $y_j$ is exactly $$N_{f^*}(y_j) = \{c_1, c_2, \cdots, c_n\}.$$ 
    Furthermore, the  vertex $b_i$ is adjacent only to the vertex $x_i$, which implies that the set of color-neighbors of $b_i$ is 
    $$N_{f^*}(b_i) = \{c_i\}.$$ 
    These observations readily imply that the vertices having the color $j$ are pairwise neighbor-distinguished.

    \item Next we will deal with the vertices that received the color   $c_i$ for some $i \in \{1,2,\cdots, n\}$. 
    Notice that the only 
    vertices that received the color $c_i$ are 
    $u'_i$ from $G'$, the
    vertex $x_i$ from the gadget $X_i$, and exactly one 
    vertex of $A_p$ (resp.,  $B_p$), say $a_p$ (resp., $b_p$), for all $p \neq i$.

    From the construction of $G^*$, we know that $u'_i$ and $x_i$ are non-adjacent. Moreover, $u'_i$ and $x_i$ are both non-adjacent to the vertices of $A_p$ and $B_p$, as long as $p \neq i$. 
    Furthermore, there is no edge between the vertices of the sets $A_p$ and $B_q$ for all $p,q \in \{1,2,\cdots, n\}$. Hence, we have shown that the vertices of $G^*$ that received the color $c_j$ are independent.

    To observe that they are also neighbor-distinguished, 
    note that the vertex $u'_i$ is adjacent to some vertices of $G$ as $G$ is connected. However, $u'_i$ is not adjacent to those vertices of $G$ that have received the color $f(u_i)$ due to the construction. The vertex $u_i$ is also adjacent to  some vertices of $G'$, and all the vertices of $A_i$. 
    As all the vertices of $G'$, except $u'_i$, are colored using the set $\{c_1, c_2, \cdots, c_n\} \setminus \{c_i\}$, and as 
    all the colors of the set 
    $\{c_1, c_2, \cdots, c_n\} \setminus \{c_i\}$ 
    are used for the vertices of $A_i$, we can say that the set of color-neighbors of  $u'_i$ is given by 
    $$N_{f^*}(u'_i) = \left(\{c_1, c_2, \cdots, c_n\} \setminus \{c_i\}\right) \cup \left(S\setminus \{f(u_i)\}\right)$$
    where $S$ is a non-empty proper subset of $\{1,2,3\}$. 
    The vertex $x_i$ is adjacent to exactly the vertices of $A_i, B_i$, and $Y$. 
    That means, the 
     set of color-neighbors of  $x_i$ is given by 
    $$N_{f^*}(x_i) = K \setminus \{c_i\}.$$
    Furthermore, $b_p$ has only one color-neighbor, which is $c_p$, whereas the set of color-neighbors of $a_p$ is
    $$N_{f^*}(a_p)=\{1,2,3,c_p\}.$$
    These observations readily imply that the vertices having the color $c_i$ are pairwise neighbor-distinguished. 
    \end{itemize}
    This proves that $f^*$ is indeed a neighbor-locating $(n+3)$-coloring of $G^*$.
    \end{proof}

\begin{lemma}\label{3col_NL_backward}
    Let $G$ be a connected graph on $n$ vertices. If $G^*$ admits a neighbor-locating $(n+3)$-coloring, then $G$ admits a $3$-coloring.
\end{lemma}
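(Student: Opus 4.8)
The plan is to reverse the construction by showing that any neighbor-locating $(n+3)$-coloring $c$ of $G^*$ is forced to behave essentially like the coloring built in Lemma~\ref{3col_NLC_forward}: the vertices $x_i$ must occupy $n$ private colors, exactly three ``special'' colors remain, and the restriction of $c$ to $G$ can be collapsed onto these three colors to yield a proper $3$-coloring of $G$.

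First I would exploit the large false-twin sets. For each $i$, the $n+2$ vertices of $B_i$ all have the single neighbor $x_i$, so they are pairwise false twins and must receive distinct colors, none equal to $c(x_i)$; since only $n+2$ colors differ from $c(x_i)$, the set $B_i$ uses all of them, forcing $N_c(x_i)$ to be exactly the complement of $\{c(x_i)\}$. Consequently, two distinct $x_i,x_j$ with $c(x_i)=c(x_j)$ would share the same color-neighborhood and fail to be neighbor-distinguished; hence $c(x_1),\dots,c(x_n)$ are pairwise distinct. This leaves exactly three colors unused by the $x_i$'s, which I will call the \emph{special} colors $\alpha,\beta,\gamma$. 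Because each $y_j$ is adjacent to every $x_i$, its color must be special; and since $y_1,y_2,y_3$ have identical neighborhoods (all the $x_i$ together with all the $A_i$), they are false twins, so they carry the three special colors, one each.

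Next I would pin down the colors of $G$ and $G'$ through the sets $A_i$. The $n-1$ vertices of $A_i$ are false twins with common neighborhood $\{x_i,u_i,u'_i,y_1,y_2,y_3\}$, so they require $n-1$ distinct colors all avoiding the forbidden set $F_i=\{c(x_i),c(u_i),c(u'_i),\alpha,\beta,\gamma\}$. As $c(x_i)$ is non-special, $F_i$ already contains the four distinct colors $c(x_i),\alpha,\beta,\gamma$, and fitting $n-1$ colors outside $F_i$ into the $(n+3)$-color palette forces $|F_i|\le 4$, hence $|F_i|=4$. Therefore $c(u_i),c(u'_i)\in\{c(x_i),\alpha,\beta,\gamma\}$. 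A direct check of the construction shows that $u_i$ and $u'_i$ have the same neighborhood in $G^*$, so they too are false twins and must receive different colors; since at most one of them can equal $c(x_i)$, at least one of $c(u_i),c(u'_i)$ is special.

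Finally I would assemble the $3$-coloring. For each edge $u_iu_j$ of $G$, all four edges $u_iu_j,\,u_iu'_j,\,u'_iu_j,\,u'_iu'_j$ are present in $G^*$, which forces $\{c(u_i),c(u'_i)\}$ and $\{c(u_j),c(u'_j)\}$ to be disjoint. Defining $g(u_i)$ to be any special color lying in $\{c(u_i),c(u'_i)\}$ (nonempty by the previous step) then yields $g(u_i)\ne g(u_j)$ whenever $u_iu_j\in E(G)$, so relabeling $\{\alpha,\beta,\gamma\}$ as $\{1,2,3\}$ makes $g$ a proper $3$-coloring of $G$. I expect the main obstacle to be the simultaneous bookkeeping that (a) forces the $x_i$'s to be rainbow-colored and isolates the three special colors, and (b) confines $c(u_i),c(u'_i)$ to $\{c(x_i),\alpha,\beta,\gamma\}$ via the counting argument on $A_i$; once both are established, the disjointness observation for edges makes the collapse to three colors essentially automatic.
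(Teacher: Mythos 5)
Your proposal is correct and follows essentially the same forcing chain as the paper's proof: the false twins in $B_i$ force the $x_i$'s to receive $n$ pairwise distinct colors with $N_c(x_i)=K\setminus\{c(x_i)\}$, the vertices of $Y$ pin down the three remaining (special) colors, and the $A_i$-counting argument confines $c(u_i),c(u'_i)$ to $\{c(x_i),\alpha,\beta,\gamma\}$ with the false-twin pair $u_i,u'_i$ forcing at least one special color among them. The only (harmless) divergence is the finish: the paper swaps the colors of the false twins $u_i$ and $u'_i$ where needed and restricts the coloring to $G$, whereas you define $g(u_i)$ directly as a special color of the pair and verify properness via the four cross edges $u_iu_j$, $u_iu'_j$, $u'_iu_j$, $u'_iu'_j$ --- a slightly cleaner route to the same conclusion.
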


\begin{proof}
    Suppose that $G^*$ admits a neighbor-locating $(n+3)$-coloring $f$ using the set of colors $K=\{1,2,3,c_1,c_2,\cdots, c_n\}$. Since the vertices $y_1$, $y_2$ and $y_3$ have the same open neighborhood, under any neighbor-locating $(n+3)$-coloring $f$ of $G^*$, we have to assign three distinct colors to these three vertices. Without loss of generality, we may assume 
    $f(y_1)=1$, $f(y_2)=2$ and $f(y_3)=3$.

    For each $i \in \{1,2,\cdots, n\}$, there are $(n+2)$ vertices of degree one in $B_i$, all adjacent to $x_i$. Thus, the vertices of $B_i$ must receive $(n+2)$ distinct colors. 
    Notice that, $f(x_i) \not\in \{1,2,3\}$ as it is adjacent to all the vertices of $Y$. Hence, $x_i$ must receive one of the colors, say $c \in \{c_1, c_2, \cdots, c_n\}$. Furthermore, the set of all color-neighbors of $x_i$ is given by 
    $N_f(x_i) = K \setminus \{c\}$. So, to be neighbor distinguished, $x_i$ and $x_j$ must receive different colors whenever 
    $i \neq j$. 
    This will force the vertices from $\{x_1, x_2, \cdots, x_n\}$ 
    to receive distinct colors from $\{c_1, c_2, \cdots, c_n\}$. Thus, without loss of generality, we may assume $f(x_i) = c_i$ for all $i \in \{1, 2, \cdots, n\}$.

    As the $(n-1)$ vertices of $A_i$ are adjacent to the vertex $x_i$, which has received the color $c_i$, and all vertices of $Y$, which have received the colors $1,2$ and $3$, they cannot receive a color from the set $\{1,2,3, c_i\}$. Moreover, as the vertices of $A_i$ are false twins, they must receive distinct colors. This implies that the vertices of $A_i$ must receive (all) the colors from the set 
    $\{c_1, c_2, \cdots, c_n\} \setminus \{c_i\}$.

    Since $u_i$ and $u'_i$ are adjacent to all the vertices of $A_i$, they cannot receive a color from the set $\{c_1, c_2, \cdots, c_n\} \setminus \{c_i\}$. In other words, $u_i$ and $u'_i$ 
    must receive colors from the set $\{1,2,3, c_i\}$ only. 
    As $u_i$ and $u'_i$ are false twins, they cannot receive the same color. Therefore, one of them must receive a color from the set 
    $\{1,2,3\}$. If $u_i$ receives the color $c_i$ and $f(u'_i)\in \{1,2,3\}$, then we swap the colors of $u_i$ and $u'_i$ so that we have $f(u'_i)=c_i$ and $f(u_i)\in\{1,2,3\}$. As $u_i$ and $u'_i$ are false twins, this does not affect the neighbor-locating coloring of $G^*$.
    Hence, the restriction of $f$ to the induced subgraph $G$ will provide a $3$-coloring of $G$. 
\end{proof}

\begin{lemma}\label{3col_LC_forward}
   Let $G$ be a connected graph on $n$ vertices. If $G$ admits a $3$-coloring, then the graph $G^*$ admits a locating $(n+3)$-coloring.
\end{lemma}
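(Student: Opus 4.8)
The plan is to avoid re-doing any work and instead inherit the coloring already built in Lemma~\ref{3col_NLC_forward}. The key observation, recorded in the introduction of the paper through the inequality $\chi(G)\le\chi_L(G)\le\chi_{NL}(G)$, is that \emph{every} neighbor-locating coloring is automatically a locating coloring. Hence, once we know (by Lemma~\ref{3col_NLC_forward}) that a $3$-coloring of $G$ yields a neighbor-locating $(n+3)$-coloring $f^*$ of $G^*$, the same $f^*$ will serve as the desired locating $(n+3)$-coloring, and there is essentially nothing further to construct.

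The one point I would make explicit is \emph{why} an NL-coloring is an L-coloring. Let $x$ and $y$ be two distinct vertices of $G^*$ with $f^*(x)=f^*(y)$. Since $f^*$ is neighbor-locating, $N_{f^*}(x)\neq N_{f^*}(y)$, so there is a color $i$ lying in exactly one of these two sets; say $i\in N_{f^*}(x)\setminus N_{f^*}(y)$. Then $x$ has a neighbor in the color class $S_i$, giving $d(x,S_i)=1$, whereas $y$ has no neighbor in $S_i$ and, because $f^*$ is proper and $f^*(y)=f^*(x)\neq i$, we also have $y\notin S_i$, so $d(y,S_i)\ge 2$. Thus $x$ and $y$ are metric-distinguished, which is exactly what the definition of a locating coloring demands.

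Combining the two observations, the coloring $f^*$ from Lemma~\ref{3col_NLC_forward} is a locating $(n+3)$-coloring of $G^*$, which proves the lemma. I expect no genuine obstacle here: the entire substance lies in the already-verified construction of Lemma~\ref{3col_NLC_forward}, and the only additional ingredient is the routine hierarchy fact $\chi_L\le\chi_{NL}$, made concrete for the specific coloring $f^*$. If one preferred a self-contained argument instead, one could copy the explicit color assignment from Lemma~\ref{3col_NLC_forward} verbatim and re-verify the metric-distinguishing condition directly for the pairs of same-colored vertices (those colored $j\in\{1,2,3\}$ and those colored $c_i$); however, this would be strictly more work and would merely duplicate the neighbor-distinguishing computations already carried out there.
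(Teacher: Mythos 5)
Your proposal is correct and matches the paper's proof exactly: the paper also deduces the lemma in one line from Lemma~\ref{3col_NLC_forward} via the fact that every neighbor-locating coloring is a locating coloring. Your explicit verification of that fact (the $d(x,S_i)=1$ versus $d(y,S_i)\ge 2$ argument) is sound and merely spells out what the paper takes as known.
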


\begin{proof}
    Since every neighbor-locating coloring is also a locating coloring, the proof follows from Lemma~\ref{3col_NLC_forward}.
\end{proof}

\begin{lemma}\label{3col_LC_backward}
    Let $G$ be a connected graph on $n$ vertices. If $G^*$ admits a locating $(n+3)$-coloring, then $G$ admits a  $3$-coloring.
\end{lemma}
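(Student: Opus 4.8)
The plan is to establish the converse direction of the NP-completeness reduction for locating colorings, mirroring the structure of the proof of Lemma~\ref{3col_NL_backward} but now having only the weaker locating-coloring hypothesis at our disposal. Suppose that $G^*$ admits a locating $(n+3)$-coloring $f$ using the color set $K = \{1,2,3,c_1,c_2,\cdots,c_n\}$. The strategy is to show that, despite the weaker hypothesis, the same rigid color structure on the gadgets is forced, so that the restriction of $f$ to the induced copy of $G$ again yields a proper $3$-coloring.

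First I would analyze the set $Y = \{y_1,y_2,y_3\}$: since these vertices are false twins (identical open neighborhoods), no two of them can be metric-distinguished unless they receive distinct colors, so they must get three distinct colors, say $1,2,3$. The same false-twin argument applies within each $B_i$ (its $n+2$ vertices all have the single neighbor $x_i$) and within each $A_i$ (whose $n-1$ vertices are mutually false twins), forcing distinct colors inside each of these sets. The main difference from the neighbor-locating case is that in a \emph{locating} coloring, vertices are separated by their vector of \emph{distances} to color classes rather than by their color-neighborhoods; so at each step where Lemma~\ref{3col_NL_backward} invoked ``$N_f(x_i)=K\setminus\{c\}$ forces the $x_i$ to differ,'' I would instead argue via distances. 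Concretely, $x_i$ cannot take a color in $\{1,2,3\}$ because it is adjacent to all of $Y$; the remaining combinatorial counting (the $A_i$ use exactly $\{c_1,\dots,c_n\}\setminus\{c_i\}$, and $u_i,u_i'$ are squeezed into $\{1,2,3,c_i\}$) carries over from adjacency constraints alone, which hold identically under a locating coloring since properness is common to both notions.

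The crux — and the step I expect to be the main obstacle — is re-deriving that the $x_i$ receive pairwise distinct colors from $\{c_1,\dots,c_n\}$ using only distance information. Here I would exploit that each $x_i$ lies at distance $1$ from its private set $B_i$, which monopolizes $n+2$ colors, so $x_i$ is at distance $1$ from every color class; two vertices $x_i,x_j$ sharing a color would both be distance-$1$ from all classes and hence fail to be metric-distinguished, unless the distance profiles differ elsewhere in the graph. I would verify that no color class places $x_i$ and $x_j$ at different distances, using the symmetry of the gadget attachments, thereby forcing distinct colors on the $x_i$'s exactly as before. Once the $x_i$ receive distinct colors $c_i$ (up to relabeling) and the $A_i$'s exhaust the remaining $c$-colors, the argument concludes identically: $u_i,u_i'$ are confined to $\{1,2,3,c_i\}$, are false twins hence differently colored, so one of them lands in $\{1,2,3\}$, and after possibly swapping $u_i\leftrightarrow u_i'$ (harmless since they are false twins) the restriction of $f$ to $G$ is a proper $3$-coloring. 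I would close by remarking that all structural forcing used only properness plus the separation of false twins, both of which a locating coloring guarantees, so the weaker hypothesis suffices.
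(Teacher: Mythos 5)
Your proposal is correct and follows essentially the same route as the paper: false twins force distinct colors on $Y$, on each $B_i$, and on each $A_i$; the key step that the $x_i$ get pairwise distinct colors is exactly the paper's argument (since $B_i$ realizes all $n+2$ colors other than $f(x_i)$, the distance vector of $x_i$ is $0$ to its own class and $1$ to every other, so two same-colored $x_i,x_j$ could never be metric-distinguished --- your cautious ``verify elsewhere'' step is immediate because the distance to a class is a minimum already attained inside $B_i$); the remaining forcing of $A_i$, the confinement of $u_i,u_i'$ to $\{1,2,3,c_i\}$, and the harmless false-twin swap all match the paper's proof.
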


\begin{proof}
    Note that, under any locating coloring, false twins must receive distinct colors as they have the same distance to every vertex. Suppose that $G^*$ admits a locating $(n+3)$-coloring $f$ using the set of colors $K=\{1,2,3,c_1,c_2,\cdots,c_n\}$. As the vertices $y_1$, $y_2$ and $y_3$ are pairwise false twins, they must receive distinct colors, say $f(y_1)=1$, $f(y_2)=2$ and $f(y_3)=3$. 
    
    There are $(n+2)$ vertices of degree one in $B_i$ for all $i\in\{1,2,\cdots,n\}$, which are pairwise false twins. So, they must receive $(n+2)$ distinct colors. Further, $f(x_i)\not\in \{1,2,3\}$ as $x_i$ is adjacent to all the vertices of $Y$. Let $c\in \{c_1,c_2,\cdots,c_n\}$ be the color given to $x_i$, where $c$ does not appear in $B_i$. If $x_i$ and $x_j$ ($i\not=j$) receive the same color $c$, then they are not metric-distinguished as they are at distance one from all other color classes. Hence, $x_i$ and $x_j$ must receive distinct colors. This implies that the vertices $\{x_1, x_2, \cdots, x_n\}$ must receive distinct colors from $\{c_1,c_2,\cdots,c_n\}$. Without loss of generality, let $f(x_i)=c_i$ for all $i\in \{1,2,\cdots,n\}$. 
    
    Note that the $(n-1)$ vertices of $A_i$ are adjacent to the vertex $x_i$, which has received the color $c_i$, and all vertices of $Y$, which have received the colors $1,2$ and $3$. So, they cannot receive a color from the set $\{1,2,3, c_i\}$. Moreover, as the vertices of $A_i$ are pairwise false twins, they must receive distinct colors. This implies that the vertices of $A_i$ must receive (all) the colors from the set 
    $\{c_1, c_2, \cdots, c_n\} \setminus \{c_i\}$.
    
    As $u_i$ and $u'_i$ are adjacent to all the vertices of $A_i$, they cannot receive a color from the set $\{c_1,c_2,\cdots,c_n\}\setminus \{c_i\}$. The only set of colors allowed for $u_i$ and $u'_i$ are $\{c_i,1,2,3\}$. Moreover, as $u_i$ and $u'_i$ are false twins, they must receive distinct colors. Therefore, one of them is forced to receive a color from the set $\{1,2,3\}$. If $u_i$ receives the color $c_i$ and $f(u'_i)\in \{1,2,3\}$, then we swap the colors of $u_i$ and $u'_i$ so that we have $f(u'_i)=c_i$ and $f(u_i)\in\{1,2,3\}$. As $u_i$ and $u'_i$ are false twins, this does not affect the neighbor-locating coloring of $G^*$. Thus, restricting the coloring $f$ to the induced subgraph $G$ gives a $3$-coloring of $G$.
\end{proof}

\begin{lemma}\label{G*_avgdeg}
  If $G$ is a connected planar graph with maximum degree $4$, then $G^*$ has average degree at most $7$.
\end{lemma}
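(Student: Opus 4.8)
The plan is to compute the number of vertices and edges of $G^*$ exactly as functions of $n$ and $m := |E(G)|$, and then bound the ratio. First I would count the vertices. The construction contributes the $n$ vertices of $G$, the $n$ vertices of $G'$, the three vertices of $Y$, and for each $i$ the gadget $X_i$, which contains $x_i$ together with $|A_i| = n-1$ and $|B_i| = n+2$ vertices, i.e.\ $2n+2$ vertices per gadget. Summing gives $|V(G^*)| = 2n + 3 + n(2n+2) = 2n^2 + 4n + 3$.

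Next I would count the edges, grouping them by origin. The edges among $\{u_i\}$ and $\{u'_i\}$ consist of the $m$ edges of $G$, the $m$ edges of the copy $G'$, and the two cross-edges $u'_iu_j, u_iu'_j$ for every edge $u_iu_j$ of $G$; this contributes $4m$ edges. Inside each gadget $X_i$ the vertex $x_i$ sends $n-1$ edges to $A_i$ and $n+2$ edges to $B_i$. The attachment of $X_i$ to $u_i$ and to $u'_i$ makes each of these adjacent to all of $A_i$, contributing $2(n-1)$ edges; the attachment of the three vertices of $Y$ to $X_i$ contributes $3(n-1)$ edges to $A_i$; finally each $y_j$ is joined to $x_i$, giving $3$ more edges. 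Per index $i$ these gadget-related edges total $(2n+1) + 2(n-1) + 3(n-1) + 3 = 7n - 1$, so over all $i$ they contribute $n(7n-1) = 7n^2 - n$ edges. Hence $|E(G^*)| = 4m + 7n^2 - n$. As a sanity check one can verify this via the handshake lemma: each vertex of $A_i$ has degree $6$, each vertex of $B_i$ degree $1$, each $x_i$ degree $2n+4$, each $y_j$ degree $n^2$, and each $u_i$ or $u'_i$ degree $2\deg_G(u_i) + (n-1)$.

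Finally I would invoke the degree bound. Since $G$ has maximum degree $4$, we have $2m = \sum_i \deg_G(u_i) \le 4n$, so $m \le 2n$ and $4m \le 8n$. Therefore the average degree of $G^*$ satisfies
$$\frac{2\,|E(G^*)|}{|V(G^*)|} = \frac{2(4m + 7n^2 - n)}{2n^2 + 4n + 3} \le \frac{14n^2 + 14n}{2n^2 + 4n + 3},$$
and this last quantity is at most $7$, because $14n^2 + 14n \le 14n^2 + 28n + 21 = 7(2n^2 + 4n + 3)$. This yields the claimed bound.

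I do not expect a genuine obstacle: planarity plays no role here (only the maximum-degree-$4$ hypothesis, through $m \le 2n$, is used), and the whole argument is elementary counting. The only thing demanding care is the bookkeeping of edges incident to the sets $A_i$, which receive edges from four different sources, namely $x_i$, $u_i$, $u'_i$, and the three vertices of $Y$; keeping these straight, and not double-counting the $G$/$G'$ cross-edges, is where a careless count could go astray.
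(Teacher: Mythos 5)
Your proposal is correct and follows essentially the same route as the paper: both count $|V(G^*)| = 2n^2+4n+3$ and $|E(G^*)| = 7n^2 - n + 4m$, bound $m \le 2n$ via the maximum-degree-$4$ hypothesis, and conclude $2|E(G^*)|/|V(G^*)| \le 7$. Your extra handshake-lemma sanity check and the explicit final inequality $14n^2+14n \le 7(2n^2+4n+3)$ are fine additions but do not change the argument.
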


\begin{proof}
  Let $G$ be a connected planar graph with maximum degree $4$ on $n$ vertices. Then $G$ has $m \le 2n$ edges. 
  Let us first count the number of vertices $n^*$ in $G^*$. 
  There are $n$ vertices in each of $G$ and $G'$, $(2n+2)$ vertices in each of the $n$ gadgets $X_i$, and $3$ vertices in the set $Y$. 
  Thus, we have 
  $$n^*=(n+n)+n(2n+2)+3=2n^2+4n+3.$$
  
  Next, let us count the number of edges $m^*$ in $G^*$. There are $m$ edges in each of  $G$ and $G'$, 
  $2m$ edges between the vertices of $G$ and $G'$,
  $(n-1)$ edges between each vertex $u_i$ (resp., $u'_i$) and the gadget $X_i$,
  $(2n+1)$ edges in each of the gadgets $X_i$,
  and
  $3n$ edges between each $X_i$ and $Y$. 
  Thus, we have 
  $$m^*=(m+m)+2m+2n(n-1)+n(2n+1)+3n^2=7n^2-n+4m \leq 7n^2+7n.$$ 
  Therefore, $G^*$ is a graph with average degree at most $7$.
\end{proof}

\begin{lemma}\label{G*_mad20}
  If $G$ is a connected planar graph with maximum degree $4$, then $G^*$ has maximum average degree at most $20$.
\end{lemma}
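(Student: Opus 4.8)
The plan is to show directly that every subgraph of $G^*$ has at most $10$ times as many edges as vertices, which yields maximum average degree at most $20$. Since for a fixed vertex set the induced subgraph carries the most edges, it suffices to take an induced subgraph $H$ on a vertex set $W$ and prove $|E(H)|\le 10|W|$. First I would record the degrees in $G^*$: every vertex of $A_i$ has degree exactly $6$ (its neighbors are $x_i,u_i,u'_i,y_1,y_2,y_3$), every vertex of $B_i$ has degree $1$ (its only neighbor is $x_i$), each $x_i$ has degree $2n+4$, each $u_i$ and $u'_i$ has degree $2\deg_G(u_i)+(n-1)\le n+7$, and each $y_j$ has degree $n^2$. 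The main obstacle is precisely that $y_1,y_2,y_3$ and the $x_i$ have degree growing with $n$, so the trivial ``maximum degree'' bound on $\tfrac{2|E(H)|}{|W|}$ is useless; the whole point is to charge each edge to an endpoint of bounded degree.

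To carry this out I would partition $W$ into $U_H=W\cap(V(G)\cup V(G'))$, $Z_H=W\cap\{x_1,\dots,x_n\}$, $A_H=W\cap\bigcup_iA_i$, $B_H=W\cap\bigcup_iB_i$ and $Y_H=W\cap Y$, writing $u,z,a,b,y$ for their sizes, so $|W|=u+z+a+b+y$ and $y\le 3$. Every edge of $G^*$ has one of six types: $u$--$u$ (inside $V(G)\cup V(G')$), $A$--$x$, $A$--$u$, $A$--$y$, $x$--$y$, and $x$--$B$; checking that these exhaust all adjacencies is immediate from the construction of $G^*$. I would then bound each type by charging it to a low-degree endpoint. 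The sets $A_H$ and $B_H$ are independent in $G^*$, so every edge meeting $A_H$ has exactly one endpoint there, and hence the $A$--$x$, $A$--$u$ and $A$--$y$ edges together number at most $6a$; likewise every $x$--$B$ edge is charged to its unique $B$-endpoint, giving at most $b$ of them. The subgraph of $G^*$ induced on $V(G)\cup V(G')$ has maximum degree $2\Delta(G)\le 8$, so the $u$--$u$ edges inside $U_H$ number at most $4u$. Finally each $x_i$ has at most the three vertices of $Y$ as neighbors in $Y$, so the $x$--$y$ edges number at most $3z$.

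Summing these four disjoint contributions gives
\[
|E(H)|\le 6a+b+4u+3z\le 10(a+b+u+z+y)=10|W|,
\]
so that $\tfrac{2|E(H)|}{|W|}\le 20$, and since $H$ was an arbitrary (induced) subgraph this proves $\mathrm{mad}(G^*)\le 20$. The only delicate points are (i) verifying that the six edge types above cover all edges, so nothing is left uncounted, and (ii) using the independence of $A_H$ so that the ``degree $6$'' count charges each incident edge exactly once; both are routine consequences of the construction. I expect the setup of the charging partition to be where care is needed, but once the edges are grouped by type the inequality is immediate and in fact leaves ample slack (the true maximum average degree is closer to $12$).
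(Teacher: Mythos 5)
Your proof is correct, but it takes a genuinely different route from the paper's. The paper decomposes the edge set of $G^*$ into four spanning pieces: the subgraph induced on $V(G)\cup V(G')$, which has maximum degree $2\Delta(G)\le 8$ and hence maximum average degree at most $8$, plus three triangle-free planar graphs covering the gadget and $Y$ edges, each of maximum average degree less than $4$ by Euler's formula; summing over the decomposition (using the standard subadditivity of maximum average degree over edge decompositions) gives $8+4+4+4=20$. You instead run a direct charging count on an arbitrary induced subgraph $H$: your six edge types do exhaust $E(G^*)$ (the only adjacencies in the construction are within $V(G)\cup V(G')$, between $A_i$ and $\{x_i,u_i,u'_i\}\cup Y$, between $B_i$ and $x_i$, and between the $x_i$ and $Y$), the degree computations ($\deg 6$ on $A$-vertices, $\deg 1$ on $B$-vertices, maximum degree $8$ in the $U$-block, at most $3$ edges from each $x_i$ into $Y$) are all accurate, and the independence of $A_H$ and $B_H$ guarantees each edge is charged exactly once. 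Your approach buys two things: it avoids any appeal to planarity of the gadget layers (and to the subadditivity fact, which the paper leaves implicit), and it yields the strictly stronger conclusion $|E(H)|\le 6a+b+4u+3z\le 6|W|$, i.e.\ maximum average degree at most $12$, whereas the paper's decomposition is quicker to state but looser. Both are complete proofs of the stated bound of $20$.
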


\begin{proof}
  Let $G$ be a connected planar graph with maximum degree $4$ on $n$ vertices. We will observe an edge decomposition of $G^*$.

  Let $G^*_1$ be the subgraph of $G^*$ induced by the vertices of $G$ and $G'$. As $G$ has maximum degree $4$,  $G^*_1$ has maximum degree $8$. Therefore, the maximum average degree of $G^*_1$ is $8$ or less.

  Let $G^*_2$ be the graph obtained from $G^*$ by deleting the 
  vertices of $G'$ and $Y$, and the edges of $G$. This is a triangle-free planar graph, and thus has maximum average degree less than $4$. 
  
  Let $G^*_3$ be the graph obtained by taking the vertices of $G'$ and the $X_i$s,  and the vertex $y_1$. Moreover, $G^*_3$ also has the edges between the vertices of $G'$ and the $X_i$s, as well as the vertex $y_1$ and the $X_i$s. Even this is a triangle-free planar graph, and thus has maximum average degree less than $4$.

  Let $G^*_4$ be the graph obtained by taking the 
  vertices $y_2, y_3$,
  and the vertices of the $X_i$s. 
  Moreover, $G^*_4$ also have the edges between the vertices 
  $y_2, y_3$ and the $X_i$s.  This is also a triangle-free planar graph, and thus has maximum average degree less than $4$. 
  
  Notice that, the edges of the subgraphs $G^*_1, G^*_2, G^*_3$, and $G^*_4$
  together give all the edges of $G^*$. 
  Thus, we can say that the maximum average degree of $G^*$ is at most $20$. 
\end{proof}

\begin{lemma}\label{G*_4partite}
   If $G$ is a connected planar graph with maximum degree $4$, 
   then the graph $G^*$ is $4$-partite.
\end{lemma}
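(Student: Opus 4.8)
The plan is to exhibit an explicit proper $4$-coloring of $G^*$, since a graph is $4$-partite precisely when it admits a proper coloring with at most four colors. I will use the palette $\{1,2,3,4\}$ and build the coloring in layers, starting from a fixed proper $4$-coloring $g$ of $G$; such a $g$ exists because $G$ is planar (by the Four Color Theorem; alternatively by Brooks' theorem, since $\Delta(G)\le 4$ and $G\neq K_5$ as $K_5$ is non-planar).

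First I would color the ``graph part''. Set $c(u_i)=c(u_i')=g(u_i)$ for every $i$. This is legitimate because $u_i$ and $u_i'$ are non-adjacent false twins, while every edge inside $G\cup G'$ — of the form $u_iu_j$, $u_i'u_j'$, $u_iu_j'$, or $u_i'u_j$ — arises only from an adjacency $u_i\sim u_j$ in $G$, and hence joins vertices colored $g(u_i)\neq g(u_j)$. The key feature to record is that $u_i$ and $u_i'$ get the \emph{same} color, so together they block only a single color. Next I would color the gadgets and $Y$: assign every vertex of $Y$ the single color $1$ (allowed, as $Y$ is independent), and every vertex $x_i$ the color $2$ (allowed, as the $x_i$ are pairwise non-adjacent, and the only already-colored neighbors of $x_i$ are the vertices of $Y$, all colored $1\neq 2$). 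Each vertex of $A_i$ is then adjacent exactly to $x_i$, to $u_i$ and $u_i'$, and to all of $Y$, so its forbidden colors lie in $\{1,2,g(u_i)\}$, leaving at least one color of $\{3,4\}$ free; since $A_i$ is independent, I give all of $A_i$ that free color. Finally, each vertex of $B_i$ has degree one and is adjacent only to $x_i$, so coloring all of $B_i$ with color $1$ avoids $c(x_i)=2$.

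The main obstacle — and the only place the argument could break — is guaranteeing that each $A_i$ has a free color: $A_i$ shares the six neighbors $x_i,y_1,y_2,y_3,u_i,u_i'$, and a careless coloring could spread four distinct colors among them and exhaust the palette. This is exactly why the three collapses above are forced upon me: making $Y$ monochromatic, making all $x_i$ share one further color, and giving $u_i,u_i'$ the common color $g(u_i)$ together cap the forbidden set on $A_i$ at three colors. It then remains only to verify routinely that no monochromatic edge appears across the finitely many edge types of $G^*$ — the $G\cup G'$ edges (handled by properness of $g$), the gadget edges $x_i$–$A_i$ and $x_i$–$B_i$, the attachment edges $u_i$–$A_i$ and $u_i'$–$A_i$ (handled since $A_i\neq g(u_i)$), and the $Y$-attachment edges $y_j$–$A_i$ and $y_j$–$x_i$ (handled since $c(Y)=1$ differs from the colors on $A_i\subseteq\{3,4\}$ and on $x_i=2$). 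This confirms that the coloring is proper, and hence that $G^*$ is $4$-partite.
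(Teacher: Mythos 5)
Your proof is correct and takes essentially the same approach as the paper's: both extend a proper $4$-coloring of $G$ to an explicit layered $4$-coloring of $G^*$, giving $u_i$ and $u_i'$ a common color and keeping $Y$ monochromatic so that each $A_i$ is forbidden at most three colors and retains a free one (your particular assignments to $x_i$, $A_i$, $B_i$, $Y$ differ from the paper's only cosmetically). Your side remark that Brooks' theorem suffices in place of the Four-Color Theorem (since $\Delta(G)\le 4$ and $K_5$ is non-planar, with the complete-graph and odd-cycle exceptions still $4$-colorable) is a minor but genuine strengthening of the justification, as the paper invokes the Four-Color Theorem.
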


\begin{proof}
     By the Four-Color Theorem~\cite{appel1977every}, every planar graph is $4$-colorable. Thus, there is a $4$-coloring, say $f$, of the graph $G$.
    We want to extend $f$ to a  $4$-coloring $f^*$ of $G^*$. As admitting a $4$-coloring and being $4$-partite are the same, we will be done if we can extend $f$ as mentioned above.

    As mentioned before, $f^*$ is an extension of $f$, and hence the colors assigned to 
    the vertices of $G$ under $f$ are retained. 
    Moreover, for a vertex of $G$, we assign the same color to its false twin in $G'$.
    In other words, we have 
    $$f^*(u'_i) = f^*(u_i)= f(u_i) $$ 
    for all $i \in \{1,2,\cdots, n\}$. 
    
    Next, if $f^*(u_i)=1$, then we will assign $f^*(x_i)=1$. On the other hand,  
    if $f^*(u_i)\neq 1$, then we will assign $f^*(x_i)=2$.
    Furthermore, we will assign the color $2$ (resp., $1$) to all the vertices of $A_i$ and $B_i$ if $f^*(x_i)=1$ 
    (resp., $f^*(x_i)=2$). Finally, we assign the color $3$ to all the vertices of $Y$. Notice that this is a $4$-coloring of $G^*$. 
   \end{proof}

\bigskip

\noindent
\textit{\textbf{Proof of Theorem~\ref{k_NLC_NPC}.}} 
It is easy to verify whether a given coloring is a neighbor-locating coloring (resp. locating coloring), so the problem is in NP. 

On the other hand,  Lemmas~\ref{3col_NLC_forward},~\ref{3col_NL_backward} show that the \textsc{NL-Coloring} problem is NP-hard and Lemmas~\ref{3col_LC_forward},~\ref{3col_LC_backward} show that the \textsc{L-Coloring} problem is NP-hard for the graphs of the type $G^*$ where $G$ is a connected graph. Moreover, as the \textsc{$3$-Coloring} problem remains NP-hard even when restricted to the family of connected planar graphs having maximum degree at most $4$, and as Lemmas~\ref{G*_avgdeg}, ~\ref{G*_mad20}, and~\ref{G*_4partite} show that  
under such conditions, $G^*$ has average degree at most $7$, maximum average degree at most $20$, and is a $4$-partite graph, the proof follows. \qed

\section{Conclusions}
In this article, we have studied the neighbor-locating coloring of sparse graphs. Initially, we studied how big the gaps can be between the related parameters $\chi(G), \chi_L(G)$ and $\chi_{NL}(G)$. Later, we have obtained an upper bound on the number of vertices of a sparse graph in terms of neighbor-locating chromatic number. Also, we have proved that the bound is tight by providing constructions of graphs which almost achieve the bound. Moreover, we have proved that the \textsc{L-Coloring} and the \textsc{NL-Coloring} problems are NP-complete for sparse graphs with average degree at most~7, maximum average degree at most~20 and $4$-partite. Based on our work, and in general relevant to the topic, 
we would like to provide a list of open problems. 

\begin{question}
    What is a tight bound for the maximum order of a planar graph with neighbor-locating chromatic number $k$? Is the bound $n=\bigo(k^7)$ tight?
\end{question}

\begin{question}
    Under what condition does a graph have its neighbor-locating chromatic number equal to its locating chromatic number (resp., chromatic number)?
\end{question}

\begin{question}
    How much can the neighbor-locating chromatic number increase or decrease after deleting a vertex (resp., an edge) of a graph?
\end{question}

\begin{question}
    Are the \textsc{L-Coloring} and the \textsc{NL-Coloring}  problems NP-hard for other restricted classes of sparse graphs, for example planar graphs, or graphs of bounded maximum degree (for example subcubic graphs)?
\end{question}

\begin{question}
    Can the lower bound obtained from the construction from Theorem~\ref{graph construction} be further improved and made closer to the upper bound from Theorem~\ref{th_lower bound_new}?
\end{question}

\bigskip

\noindent \textbf{Acknowledgements:}
We would like to thank Prof. Koteswararao Kondepu, Department of Computer Science \& Engineering, IIT Dharwad for his valuable suggestions regarding possible applications.
This work is partially supported  by the following projects: ``MA/IFCAM/\\18/39'', ``SRG/2020/001575'', ``MTR/2021/000858'', ``NBHM/RP-8 (2020)/Fresh'', and ``NSOU Project No. Reg/0520 dated 07.06.2024''. Research by the first and second authors is partially sponsored by a public grant overseen by the French National Research Agency as part of the ``Investissements d’Avenir'' through the IMobS3 Laboratory of Excellence (ANR-10-LABX-0016), the IDEX-ISITE initiative CAP 20-25 (ANR-16-IDEX-0001) and the ANR project GRALMECO (ANR-21-CE48-0004).

\bibliographystyle{abbrv}
\bibliography{References.bib}

\end{document}